\theoremstyle{remark}
\newtheorem{Rem}{Remark}
\theoremstyle{definition}
\newtheorem{Thm}{Theorem}
\newtheorem{Cond}{Conditions}
\newtheorem{Def}{Definition}
\newtheorem{Cor}{Corollary}
\newtheorem{Prop}{Proposition}
\author[1]{Christian Mandler\thanks{Corresponding author. \\ \textit{E-mail addresses: \href{mailto:christian-mandler@math.uni-giessen.de}{Christian.Mandler@math.uni-giessen.de},  \href{mailto:Ludger.Overbeck@math.uni-giessen.de}{Ludger.Overbeck@math.uni-giessen.de}}}}
\author[1]{Ludger Overbeck}
\affil[1]{Mathematisches Institut, Justus-Liebig-Universit\"{a}t Gießen, Gießen, Germany}
\title{\textbf{A Functional It\={o}-Formula for Dawson-Watanabe Superprocesses}}
\begin{document}

\maketitle 
\thispagestyle{empty}

%



\begin{abstract}
\noindent
We derive an It\={o}-formula for the Dawson-Watanabe superprocess, a well-known class of measure-valued processes, extending the classical It\={o}-formula with respect to two aspects. Firstly, we extend the state-space of the underlying process $(X(t))_{t\in [0,T]}$ to an infinite-dimensional one - the space of finite measure. Secondly, we extend the formula to functions $F(t,X_t)$ depending on the entire paths $X_t=(X(s\wedge t))_{s \in [0,T]}$ up to times $t$. This later extension is usually called functional It\={o}-formula. Finally we remark on the application to predictable representation for martingales associated with superprocesses.  \\

\noindent
\textbf{Key words:} Functional It\={o}-Formula; Dawson-Watanabe superprocesses; Measure-valued diffusion; Non-anticipative path differentation; Dupire formula; Predictable representation \\
\textbf{MSC2020 subject classifications:} 60J68; 60H05; 60G07; 60G57
\noindent

\end{abstract}

\section{Introduction}

Dawson-Watanabe Superprocesses are measure-valued Markov processes and as such take values in infinite dimensional spaces. Therefore, a fundamental tool in stochastic calculus, the traditional It\={o}-formula, is not directly applicable to functions of such processes. Dawson \cite{Daw78} proved an It\={o}-formula for measure-valued process but his result is limited to what we call \emph{finitely based functions} and compares to Theorem \ref{Thmfb} in this work. This result is related to the martingale problem associated to -  or even defining -  a superprocesses, whose generator is usually only derived for these finitely-based functions. In \cite{JT03} the domain of generator was extended to a wider class of functions. In the present work, we give, in a first step, the It\={o}-formula for functions in this extended domain. The main result (Theorem \ref{Thm.fct.Ito}), however, is the  \emph{functional} extension of this It\={o}-formula for superprocesses.  \emph{Functional} means that, instead of functions of the current state $X(t)$ of the superprocess, we can consider functions $F(t, X_t)$ of the paths $X_t=(X(t \wedge s))_{s\in [0,T]}$ up to time $t$.  \\

In stochastic calculus, Dawson-Watanabe Superprocesses have attracted a lot of interest since their introduction by Watanabe \cite{Wat68} and Dawson \cite{Daw75}. As one of the well understood non-Gaussian infinite-dimensional diffusion processes they have been studied for several reasons. For one, they arise naturally as a scaling limit of  branching particle systems (see Chapter 6 in \cite{Daw93} or \cite{Dy91}, \cite{DuPe99} and \cite{DuMyPe05}). This approximation enables one to derive many results for the limiting process. Also, they are closely related to a certain class of non-linear partial differential equations (see \cite{Dy93}, \cite{Dy02} or \cite{LeG99}) which provides a fruitfull interplay between non-linear PDEs and stochastic analysis. Finally, they are related to a second class of (probability-)measure-valued diffusions - the Fleming-Viot process (\cite{EthMa91}), which has applications in population genetics (as Dawson-Watanabe-superprocesses are related to population \emph{growth}). More recently, because of their relation to non-linear PDEs, they also have been helpful in some problems in mathematical finance as in \cite{Sch13} or \cite{GHL14}. For thorough introductions to the topic, we refer to the notes of Dawson \cite{Daw93}  and Perkins \cite{Per99} and  for an introductory reading to \cite{Eth00} and \cite{Daw17}. \\

In this work, we focus on a special class of Dawson-Watanabe superprocesses, the so-called $B(A,c)$-superprocesses, which includes the super-Brownian motion. These processes are obtain by considering the scaling limit of a branching particle systems on a locally compact metric space $E$ with the particle motion described by the generator $A$, branching rate $dt$ and the branching mechanism described by $\Phi(\lambda) = -\frac{1}{2} c \lambda ^2$. A $B(A,c)$-superprocess can best be characterized by the martingale problem given in Section \ref{SecSP}. Basically, we take advantage of two useful properties of $B(A,c)$-superprocesses. First, the characterizing martingale problem yields a continuous orthogonal $L^2$-martingale measure (Example 7.1.3 in \cite{Daw93}) -- a concept introduced by Walsh \cite{Wal86} that plays a substantial role in our two main results\footnote{We provide a short introduction to the topic in Section \ref{SecMM}.}. The second  property is the almost sure continuity of the sample paths of $B(A,c)$-superprocesses (Theorem 4.7.2 in \cite{Daw93}) which we use  for the result in Section \ref{SecHP}. For superprocesses sharing these two properties one can prove similar results as in the present paper. However, as for $\mathbb{R}^d$-valued processes, an extension of the functional It\={o}-formula to the discontinuous case is also feasible. \\

Our first result (Theorem \ref{Thm.Ito}) is an It\={o}-formula for functions of $B(A,c)$-super-processes. This result can be viewed as an extension to the It\={o}-formula presented in \cite{Daw78} applied to the $B(A,c)$-superprocess. We obtain the result Section \ref{SecSP} by reformulating a result in \cite{JT03} using the above mentioned martingale measure derived from the martingale problem. \\

In Section \ref{SecHP} we prove the functional version (Theorem \ref{Thm.fct.Ito}) of the It\={o}-formula in Theorem \ref{Thm.Ito}. While the traditional It\={o}-formula only takes into account the current state $X(\cdot)$ of a stochastic process $X$, a functional It\={o}-formula considers the whole history of the process, i.e. $X(t \wedge \cdot)$, the path of $X$ up to time $t$. This notion goes back to the landmark paper \cite{Du09} by Dupire. Picking up Dupire's approach, Cont and Fournié \cite{CoFo13} formalized some of the concepts presented in \cite{Du09}. In the companion paper \cite{CoFo10}, the authors extend the functional It\={o}-formula to processes with cadlag paths. Using an alternative approach, Levental et al. also proved a version of the functional It\={o}-formula for cadlag semimartingales in \cite{Lev13}. As we work with the alternative approach based on \cite{Co16}, we devote part of Section \ref{SecApp} to a comparison of this approach to the \emph{vector bundle} approach found in \cite{Du09}, \cite{CoFo10} and \cite{CoFo13}. \\

In the literature referred to above, only $\mathbb{R}^d$-valued processes were considered. Our result is an extension of Dupire's result to certain measure-valued processes, i.e. processes taking values in an infinite dimensional space.  Another extension of the function It\={o}-formula to infinite dimensional spaces is due to Rosestolato, who proved such a formula for Hilbert-space-valued processes in \cite{Ro16}. \\

A completely different approach to the topic is due to Russo and various co-authors. They looked at the topic from the standpoint of processes taking values in separable Banach spaces (which include the path space $C([-T,0])$) and derived a stochastic calculus for such processes. In \cite{CoRu18} the authors reformulate the functional It\={o}-formula using this approach and compare their approach to the one by Cont and Fournié. For more details on this approach, we refer to the references in \cite{CoRu18}. \\

One application to the functional It\={o}-formula presented in \cite{Du09} as well as \cite{CoFo13} and \cite{Ro16} is the derivation of an explicit martingale representation formula. In its most basic form the martingale representation formula goes back to It\={o} who proved that every square-integrable functional of a Brownian motion can be expressed as a constant plus an integral against the Brownian motion. This result has since been extended by Clark, Hausmann and Ocone, leading to the famous Clark-Haussmann-Ocone formula (\cite{Cla70}, \cite{Hau79},\cite{Oco84}). In their approach, the authors proceeded in two steps. First, some differentation on the path space takes place (like Malliavian- or H-derivative) and then one has to compute the predictable projection of the derivative.  As mention in \cite{CoFo13}, the representation obtained from the functional It\={o}-formula can be seen as a \emph{nonanticipative counterpart of the Clark-Haussmann-Ocone formula} and yields an alternative to the traditional approach of using Malliavin calculus to compute the integrand in the martingale representation. \\

Martingale representation formulas for functionals of superprocesses have already been the subject of interest in \cite{EP94}, \cite{EP95} and  \cite{Ov95}. The later analysed  the equivalence of the existence of a predictable representation of a martingale with respect to a filtration associated with a  martingale measure and the extremality of the underlying probability measure for general measure-valued processes. \\

In \cite{EP94}, the authors proved the existence of a representation for certain functionals $F$ of the superprocess as an integral with respect to the orthogonal martingale measure. In the follow-up paper \cite{EP95}, they  derived the explicit form of the integrand by making use of historical processes, an enriched version of superprocesses that keeps track of genealogies (see \cite{EP95} or \cite{DawPer91}). Their approach is in the spirit of a variational approach using Girsanov transformation as in \cite{Bis81} and yields a \emph{stochastic integration-by-parts formula}.\\

In Section \ref{SecApp} we derive the explicit form of the martingale representation formula using Theorem \ref{Thm.fct.Ito}, i.e. a completely different technique than Evans and Perkins. Just as in \cite{CoFo13}, our explicit formula can be seen as counterpart to the result in \cite{EP95}. A detailed, more thorough study of the comparison to the results in \cite{EP95} is subject of ongoing work. An analysis of the weak martingale representation is the subject of \cite{MO20}.

\section{An It\={o}-Formula for the Dawson-Watanabe Superprocess} \label{SecSP}

Let us first introduce the necessary notations. Consider a compact\footnote{The results can be extended to locally compact spaces by considering the one point compactification of $E$.} metric space $E$ with its Borel-$\sigma$-algebra $\mathcal{E}$. $E$ is the state space of the so called \emph{one particle motion}, which is defined by its infinitesimal generator $A$ generating a strongly continuous Markov semigroup $\{P_t\}_{t \geq 0 }$ acting on $C(E)$, the space of continuous functions on $E$ equipped with the supremum norm. We denote the domain of $A$ by $D(A)$ and assume that there is a dense linear subspace $D_0$ of $C(E)$ that is an algebra and closed under the mapping $P_t$ for all $t \geq 0$. If such a subspace exists, $A$ is called a good generator in  \cite{JT03}\footnote{E.g.\ in the case of the super-Brownian motion (i.e.\ $A$ is the Laplace operator), the algebra $D_0$ is given by the algebra generated by the Schwartz space of rapidly decaying test functions and the constant function 1 (see \cite{JT03}).}. We sometimes write $A^{(x)}$ when we want to emphasize that the generator is applied to functions of the variable $x$. \\

As mentioned above, the $B(A,c)$-superprocess is the diffusion limit of a branching particle system in which the particles move according to $A$ and split after exponential life times into a critical number of new particles. The branching particle system takes values in the set of finite point measures on $E$ and its diffusion limit can be defined as a Markov process with state space $M_F(E)$, the space of finite measures on $(E, \mathcal{E})$ equipped with the topology of weak convergence. We write $\langle \mu, f \rangle$ for $\int_E f d\mu$ with $f : E \rightarrow \mathbb{R}$, $\mu \in M_F(E)$ and fix a $c > 0$. \\

Fix a time $T \in [0,\infty)$, let $\Omega = C([0,T], M_F(E))$ be the space of continuous functions from $[0,T]$ to $M_F(E)$ and $X=(X(t))_{t\in [0,T]}$ be the coordinate process $X(t)(\omega)=\omega(t)$. Further, let $\mathcal{F}$ be the corresponding Borel $\sigma$-algebra induced by the sup-norm and let $(\mathcal{F}_t)_{t \in [0,T]}$ be the canonical filtration satisfying the usual conditions. Then a measure $\mathbb{P}_m$ on $\Omega$ is a solution to the martingale problem for the $B(A,c)$-superprocess started at $m \in M_F(E)$ if
\begin{equation}
\begin{gathered}
\mathbb{P}_m(X(0) = m) = 1 \text{ and for all $\phi \in D(A)$ the process}  \\
M(t)(\phi) = \langle X(t) , \phi \rangle - \langle X(0), \phi \rangle - \int_0^t \langle X(s) , A  \phi \rangle ds  , \quad t \in [0,T] \\
\text{is a $(\mathcal{F}_t)_{t\in [0,T]}$ local martingale with respect to $\mathbb{P}_m$ }  \\
\text{and has quadratic variation } [M(\phi)]_t = \int_0^t \langle X(s) , c  \phi^2 \rangle ds. 
\end{gathered}
\tag{MP} \label{MP}
\end{equation}

In this case, $\mathbb{P}_m$ is the distribution of the $B(A,c)$-superprocess. If a measure-valued stochastic process $X$  defined on a stochastic basis $(\Omega, {\cal F},\mathbb{ P}, {\cal F}_t)$ has a distribution solving \eqref{MP}, we also call $X$ a solution of the martingale problem. Note that the solution to \eqref{MP} is unique, that the resulting local martingale $M(t)(\phi)$ is a true martingale and that the resulting process $X$ has continuous paths, cf. \cite{Daw93}. \\

Next, for continuous functions  $F: [0,T] \times M_F(E) \rightarrow \mathbb{R}$ define the directional derivative in direction $\delta_x$ for $x \in E$ by 
\begin{equation*}
D_x F(t, \mu) = \lim_{\varepsilon \rightarrow 0} \frac{F(t, \mu + \varepsilon \delta_x) - F(t, \mu)}{\varepsilon} ,
\end{equation*}
if the limit exists. Higher order directional derivatives are defined by iteration. Set $D_{xy}F(s,\mu) = D_x D_y F(s,\mu)$. If the derivatives are continuous, we have $D_{xy}F(s,\mu) = D_{yx}F(s,\mu)$. When considering derivatives with respect to the time, write $D_sF(s, \mu)$ and if mixed derivatives are considered $D_{sx}F(s,\mu)$ and so on. If we denote by $DF(s,\mu)$ the function $x \mapsto D_xF(s,\mu)$, then $A^{(x)}D_xF(s, \mu)$ should be understood as $ADF(s,\mu)(x)$, i.e.\ the operator $A$ applied to the function $x \mapsto D_xF(s, \mu)$. \\

\subsection{Martingale Measures} \label{SecMM}

In order to formulate an It\={o}-formula incorporating the martingale part in the semimartingale decomposition of semimartingales $Y$ of type $Y_t = F(t, X(t))$, we have to recall the concept of martingale measures, a kind of  measure-valued martingales which goes back to Walsh (\cite{Wal86}). In the first part of this subsection, we summarize the parts in \cite{Wal86} that play a role in this work but restrict the summary to the relevant case of orthogonal martingale measures. For more general results and proofs, we refer to the original work. In the second part of this subsection, we extend the class of valid integrands of stochastic integrals with respect to martingale measures to include bounded optional processes. This is done by extending standard arguments used in the theory of stochastic integrals with respect to martingales.\\

Let $L^2 = L^2(\Omega, \mathcal{F}, \mathbb{P})$ for a probability space $(\Omega, \mathcal{F}, \mathbb{P})$ and let $(\mathcal{F}_t)_{t \in \mathbb{R}_+}$ be a right continuous filtration on this space.

\begin{Def}[Martingale Measure] \
Let $\mathcal{A}\subset \mathcal{E}$ be an algebra. 
 Then
\begin{enumerate}
\item A process $M_t(A)_{t \geq 0, A \in \mathcal{A}}$ is called a martingale measure with respect to $\mathcal{F}_t$ if
\begin{enumerate}[label = (\roman*)]
\item $M_0(A) = 0$ for all $A \in \mathcal{A}$,
\item if $t > 0$, then $M_t(\cdot)$ is a $\sigma$-finite $L^2$-valued measure\\
 (in the sense of \cite{Wal86}; in particular $\mathbb{E}[M^2_t(A)] < \infty$ and $\sup\{\mathbb{E}[M^2_t(A)]|A\in {\cal E}_n\}<\infty$, where ${\cal E}_n$ is the restriction of $\cal A$ to a subset $E_n\subset E_{n+1}, \cup E_n=E$.) 
\item $(M_t(A))_{t \geq 0}$ is a martingale with respect to the filtration $\mathcal{F}_t$.
\end{enumerate}
\item A martingale measure $M$ is called orthogonal if $A \cap B = \emptyset$ implies that $M_t(A)$ and $M_t(B)$ are orthogonal martingales, i.e. $M_t(A)M_t(B)$ is a martingale. 
\item The martingale measure $M$ is called continuous if for all $A \in \mathcal{A}$ the map $t \mapsto M_t(A)$ is continuous.
\end{enumerate}
\end{Def}

\begin{Def}[Elementary and Simple Functions] A function $f(s, x, \omega)$ is called elementary if it is of form
\begin{equation*}
f(\omega ,s,x) = X(\omega) 1_{(a , b]}(s) 1_A(x)
\end{equation*}
with $0 \leq a < b \leq T$, bounded and $\mathcal{F}_a$-measurable $X$ and $A \in \mathcal{E}$. If $f$ is a finite sum of elementary functions, it is called simple. We denote the class of simple functions by $\mathcal{S}$.
\end{Def}

\begin{Def}[Predictable Functions]
The predictable $\sigma$-field $\mathcal{P}$ on $\Omega \times L \times \mathbb{R}_+$ is the $\sigma$-algebra generated by $\mathcal{S}$. A function is called predictable if it is $\mathcal{P}$-measurable. 
\end{Def}

Now let $M$ be an orthogonal martingale measure and let $Q$ be the covariance functional  defined by 
\begin{equation*} 
Q([0,t], A, B)=[M(A),M(B)]_t.
\end{equation*}
Also, one can define a martingale $M(\phi)$ for each measurable bounded $\phi :[0,T]\times E$. Since $M$ is orthogonal, we have that $Q$ is concentrated on the diagonal and there is a measure $\nu$ with $\nu([0,t] , A) = Q( [0,t] , A , A )$. In this case the covariation $ [M(\phi),M(\psi)]$ of the martingales $M(\phi),M(\psi)$ equals
\begin{equation*}
[M(\phi),M(\psi)]_t = \int_{\mathbb{R}_+}  \int_E  \phi(s,x) \psi(s,x) \nu(ds,dx).
\end{equation*}
Using this, we can define a norm $\| \cdot \|_M$ on the predictable functions by
\begin{equation*}
\| f \|_M = \mathbb{E} \left[\int_0^T f^2(s,x)\nu(ds,dx) \right]^{\frac{1}{2}} 
\end{equation*}
which allows us to define the class $\mathcal{P}_M$ of all predictable $f$ for which $\| f \|_M < \infty$ holds. 

\begin{Prop} \label{Prop.Wal.1}
The class $\mathcal{S}$ is dense in $\mathcal{P}_M$ with respect to the norm $\| \cdot \| _M$. 
\end{Prop}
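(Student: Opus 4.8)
The plan is to copy the classical proof that simple predictable integrands are dense in the $L^{2}$-space of the Doléans measure of a square-integrable martingale, adapted to the two-parameter (time $\times$ space) setting: first reduce by truncation to \emph{bounded} predictable integrands, and then settle the bounded case with a functional monotone class argument. To set things up I would introduce the (random, $\sigma$-finite) Doléans-type measure $\mu_{M}(d\omega,ds,dx):=\mathbb{P}(d\omega)\,\nu(\omega,ds,dx)$ on $\Omega\times[0,T]\times E$, so that $\|f\|_{M}^{2}=\int f^{2}\,d\mu_{M}$ and $\mathcal{P}_{M}$ is exactly the set of predictable elements of $L^{2}(\mu_{M})$. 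Since $E$ is compact, $\mathbb{E}[\nu([0,T]\times E)]=\mathbb{E}[[M(E)]_{T}]=\mathbb{E}[M_{T}(E)^{2}]<\infty$, so $\mu_{M}$ is in fact a finite measure here; in the genuinely $\sigma$-finite Walsh setting one would first localise along an exhausting sequence $E_{n}\uparrow E$ with $\mathbb{E}[\nu([0,T]\times E_{n})]<\infty$, prove density for functions supported on $[0,T]\times E_{n}$, and pass to the limit. In particular $L^{2}(\mu_{M})\subseteq L^{1}(\mu_{M})$, and each elementary function lies in $\mathcal{P}_{M}$ because $\|X\mathbf{1}_{(a,b]}\mathbf{1}_{A}\|_{M}^{2}\le\|X\|_{\infty}^{2}\,\mathbb{E}[\nu([0,T]\times E)]<\infty$, so $\mathcal{S}\subseteq\mathcal{P}_{M}$ and the assertion is meaningful.

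For the truncation step, given $f\in\mathcal{P}_{M}$ I would set $f_{n}:=(f\wedge n)\vee(-n)$. Each $f_{n}$ is bounded and predictable, $f_{n}\to f$ pointwise, and $|f-f_{n}|^{2}\le 4f^{2}\in L^{1}(\mu_{M})$, so $\|f-f_{n}\|_{M}\to0$ by dominated convergence. Hence it suffices to approximate every \emph{bounded} predictable function in $\|\cdot\|_{M}$ by elements of $\mathcal{S}$.

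For the bounded case, let $\mathcal{H}$ be the set of bounded predictable $f$ such that for every $\varepsilon>0$ there is $g\in\mathcal{S}$ with $\|f-g\|_{M}<\varepsilon$. I would then check: $\mathcal{H}$ is a vector space (triangle inequality for $\|\cdot\|_{M}$); $\mathcal{H}$ contains the constant function $\mathbf{1}$, which differs from $\mathbf{1}_{(0,T]}(s)\mathbf{1}_{E}(x)\in\mathcal{S}$ only on the $\mu_{M}$-null set $\{0\}\times E$ (null because $\nu(\{0\}\times\cdot)=[M(\cdot)]_{0}=0$); and $\mathcal{H}$ is stable under bounded monotone limits, since if $0\le f_{k}\uparrow f$ with $f$ bounded and $f_{k}\in\mathcal{H}$, then $\|f-f_{k}\|_{M}\to0$ by dominated convergence (here $\mu_{M}$ finite is used), so $f$ inherits the approximability. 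Finally, the family
\[
\mathcal{M}:=\big\{\,X(\omega)\,\mathbf{1}_{(a,b]}(s)\,\mathbf{1}_{A}(x)\ :\ 0\le a<b\le T,\ X\ \text{bounded},\ \mathcal{F}_{a}\text{-measurable},\ A\in\mathcal{E}\,\big\}\subseteq\mathcal{S}\subseteq\mathcal{H}
\]
is stable under multiplication (a product of two such functions is again of this form: $(a_{1},b_{1}]\cap(a_{2},b_{2}]$ is a half-open interval, $\mathcal{E}$ is closed under intersection, and a product of a bounded $\mathcal{F}_{a_{1}}$-measurable and a bounded $\mathcal{F}_{a_{2}}$-measurable variable is bounded and $\mathcal{F}_{a_{1}\vee a_{2}}$-measurable) and, by definition of $\mathcal{P}$, generates the predictable $\sigma$-field. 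The functional monotone class theorem then yields that $\mathcal{H}$ contains every bounded $\mathcal{P}$-measurable function, and combined with the truncation step this gives the density of $\mathcal{S}$ in $\mathcal{P}_{M}$.

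I do not expect a genuine obstacle; the argument is purely structural. The two points that need care are, first, the reduction to a \emph{finite} Doléans measure in the honestly $\sigma$-finite situation (so that dominated convergence is legitimate at the monotone-limit step) — harmless here because $E$ is compact — and, second, the verification that $\mathcal{M}$ is a multiplicative generating class for $\mathcal{P}$ sitting inside $\mathcal{S}$, so that the functional monotone class theorem applies verbatim; everything else is just the triangle inequality and dominated convergence.
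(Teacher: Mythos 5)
Your argument is correct, and it is essentially the argument behind the statement: the paper itself gives no proof of this proposition but imports it from Walsh \cite{Wal86} (Proposition~2.3 there), whose proof is the same two-step scheme you use — truncate to bounded predictable integrands, then run a monotone class argument over the multiplicative family of elementary functions, with dominated convergence under the (locally) finite Dol\'eans measure $\mathbb{P}\otimes\nu$. The only cosmetic slip is that the exceptional set for the constant function should be written $\Omega\times\{0\}\times E$; otherwise nothing is missing.
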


This allows us to construct a stochastic integral for functions in $\mathcal{P}_M$ with respect to an orthogonal martingale measure as follows: We start with an elementary function $f$, for which we set
\begin{equation*}
f \bullet M_t(B)  (\omega)= X(\omega) (M_{t \wedge b}(A \cap B) (\omega) - M_{t \wedge a} (A \cap B)) (\omega)
\end{equation*}
for all $0 \leq s < t \leq t$ and $A \in \mathcal{E}$ and extend the definition to $\mathcal{S}$ by linearity. \\

\begin{Prop}\label{Prop.Wal.2}
It holds for $f \in \mathcal{S}$:
\begin{enumerate}[label = (\roman*)]
\item $f \bullet M$ is an orthogonal martingale measure.
\item $\mathbb{E}[ (f \bullet M_t(B))^2 ] \leq \| f \|_M^2$ for all $B \in \mathcal{E}$ and $t \leq T$.
\end{enumerate}
\end{Prop}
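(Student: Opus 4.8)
The plan is to verify both assertions first for a single elementary function and then extend to finite sums by linearity, arranging the representation so that all cross terms vanish. For an elementary $f(\omega,s,x)=X(\omega)1_{(a,b]}(s)1_A(x)$ one has $f\bullet M_t(B)=X\bigl(M_{t\wedge b}(A\cap B)-M_{t\wedge a}(A\cap B)\bigr)$, and I would check the martingale-measure axioms directly: $f\bullet M_0(B)=0$ is trivial; finite additivity of $B\mapsto f\bullet M_t(B)$ is inherited from that of $M_t$, while countable additivity in $L^2$ and the $\sigma$-finiteness bounds follow from the corresponding properties of $M$, since multiplication by the bounded $\mathcal{F}_a$-measurable $X$ only scales the relevant $L^2$-norms by $\|X\|_\infty$; the martingale property in $t$ follows by distinguishing the cases $s\ge b$, $a\le s<b$ and $s<a$ and applying optional sampling to $t\mapsto M_t(A\cap B)$ together with the $\mathcal{F}_a$-measurability of $X$ in the last case; orthogonality is immediate, since for disjoint $B_1,B_2$ the sets $A\cap B_1,A\cap B_2$ are disjoint, so $M(A\cap B_1)$ and $M(A\cap B_2)$ are orthogonal martingales and the product $f\bullet M_t(B_1)\,f\bullet M_t(B_2)$ splits into a martingale accordingly. (If $M$ is continuous, so is $f\bullet M$.)

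For (ii) on an elementary $f$, I would condition on $\mathcal{F}_a$ and use that $t\mapsto M_t(A\cap B)$ is a square-integrable martingale whose quadratic variation is described by the covariance functional $Q$, hence by $\nu$, giving
\[
\mathbb{E}\bigl[(M_{t\wedge b}(A\cap B)-M_{t\wedge a}(A\cap B))^2\mid\mathcal{F}_a\bigr]=\mathbb{E}\bigl[\nu((t\wedge a,t\wedge b]\times(A\cap B))\mid\mathcal{F}_a\bigr].
\]
Since $f^2=X^2 1_{(a,b]}(s)1_A(x)$, $\nu\ge 0$, $(t\wedge a,t\wedge b]\subseteq(a,b]$ and $A\cap B\subseteq A$, this yields
\[
\mathbb{E}\bigl[(f\bullet M_t(B))^2\bigr]=\mathbb{E}\bigl[X^2\,\nu((t\wedge a,t\wedge b]\times(A\cap B))\bigr]\le\mathbb{E}\bigl[X^2\,\nu((a,b]\times A)\bigr]=\|f\|_M^2 .
\]

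To extend to a general $f\in\mathcal{S}$, I would rewrite it as $f=\sum_{i=1}^n\sum_{j=1}^m X_{ij}1_{(t_i,t_{i+1}]}(s)1_{A_j}(x)$ using a common partition $0\le t_1<\dots<t_{n+1}\le T$, pairwise disjoint sets $A_1,\dots,A_m\in\mathcal{E}$, and bounded $\mathcal{F}_{t_i}$-measurable $X_{ij}$; assertion (i) for $f$ then follows from the elementary case and linearity. Expanding $(f\bullet M_t(B))^2$, a cross term with $i\ne i'$ vanishes in expectation because the two martingale increments live on disjoint time intervals (condition on the $\sigma$-field at the later left endpoint), and a cross term with $j\ne j'$ vanishes because $M(A_j\cap B)$ and $M(A_{j'}\cap B)$ are orthogonal martingales. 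Hence, applying the elementary estimate termwise,
\[
\mathbb{E}\bigl[(f\bullet M_t(B))^2\bigr]=\sum_{i,j}\mathbb{E}\bigl[X_{ij}^2(M_{t\wedge t_{i+1}}(A_j\cap B)-M_{t\wedge t_i}(A_j\cap B))^2\bigr]\le\sum_{i,j}\mathbb{E}\bigl[X_{ij}^2\,\nu((t_i,t_{i+1}]\times A_j)\bigr]=\|f\|_M^2 .
\]

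I expect the main obstacle to be bookkeeping rather than any deep idea: precisely, (a) making rigorous the link between the conditional second moment of a martingale-measure increment and the covariance measure $\nu$, which rests on the definition of $Q$ as $[M(\cdot),M(\cdot)]$ and on the optional sampling theorem, and (b) reducing an arbitrary simple function to a representation with a common time partition and pairwise disjoint spatial cells so that every cross term provably drops out. Both are standard adaptations of the classical construction of the stochastic integral with respect to a square-integrable martingale, so once the bookkeeping is in place the remaining estimates are routine.
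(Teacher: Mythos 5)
The paper does not actually prove Proposition~\ref{Prop.Wal.2}; it is quoted from Walsh's lecture notes, to which the reader is referred for proofs. Your argument is a correct reconstruction of exactly that standard proof: verify the martingale-measure axioms and the $L^2$ bound for a single elementary integrand via optional sampling and the identification of $[M(A)]$ with $\nu(\cdot\times A)$, then pass to simple functions by rewriting them over a common time partition with pairwise disjoint spatial cells so that all cross terms in $\mathbb{E}[(f\bullet M_t(B))^2]$ vanish (time-disjointness for $i\neq i'$, orthogonality of $M(A_j\cap B)$ and $M(A_{j'}\cap B)$ for $j\neq j'$). No gaps; this matches the intended (cited) argument.
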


For general $f \in \mathcal{P}_M$, we get from Proposition \ref{Prop.Wal.1} the existence of a series $(f_n)_n \in \mathcal{S}$ such that $\| f^n - f\|_M \rightarrow 0$ as $n \rightarrow \infty$. From Proposition \ref{Prop.Wal.2} (ii), we further get
\begin{equation*}
\mathbb{E}\left[ (f^m \bullet M_t(B) - f^n \bullet M_t(B) )^2 \right] \leq \| f^m - f^n \|_M^2 \rightarrow 0 \quad \quad \text{as $m,n \rightarrow \infty$}.
\end{equation*}
Thus, $(f^n \bullet M_t(B))_n$ is Cauchy in $L^2(\Omega, \mathcal{F}, \mathbb{P})$ and therefore converges in $L^2$ to a martingale which we call $f \bullet M_t(B)$. 

\begin{Rem}
\begin{enumerate}[label = (\roman*)]
\item The results in Proposition \ref{Prop.Wal.2} also hold for $f \bullet M$ with $f \in \mathcal{P}_M$.
\item Instead of $f \bullet M_t(B)$, from now on, we use the following notation:
\begin{equation*}
f \bullet M_t(B) = \int_0^t \int_B f(s,x) M(ds, dx) .
\end{equation*}
\end{enumerate}
\end{Rem}

\textbf{Martingale Measure for the Dawson-Watanabe Superprocess} \\

From Example 7.1.3 in \cite{Daw93} we know that the solution $M$ of the martingale problem \eqref{MP} satisfies 
\begin{equation*}
[M(\phi_1), M(\phi_2)]_t = \int_0^t \langle X(s), c \phi_1 \phi_2 \rangle ds
\end{equation*}
for all $\phi_1 , \phi_2 \in D(A)$. Further, we know that, since $D(A)$ is dense in $C(E)$ and thus $bp$-dense in the space of bounded $\mathcal{E}$-measurable functions $b\mathcal{E}$, we can extend this martingale to all functions in $b \mathcal{E}$. This finally yields a continuous orthogonal martingale measure $M(ds, dx)$ with the above mentioned measure $\nu$ given by 
\begin{equation}
\nu((0,t], B) = c \int_0^t X(s)(B) ds
\label{eq.nu}
\end{equation}
for all $t \in (0,T]$, $B \in \mathcal{E}$. \\

The above allows us to express the martingale $M(t)(\phi)$ in \eqref{MP} as an integral of $\phi$ with respect to the martingale measure $M(ds, dx)$:
\begin{equation*}
M(t)(\phi) = \int_0^t \int_E \phi(x) M(ds, dx).
\end{equation*}

From \eqref{eq.nu} we get that the integral $f \bullet M$, with $M$ being the martingale measure derived from \eqref{MP}, is defined for all functions $f$ that are predictable and satisfy
\begin{equation}
\mathbb{E}\left[ c \int_0^T \int_E | f (s,x) |^2 X(s)(dx) ds \right] < \infty  ,
\label{eq.finit}
\end{equation}
where the expectation is with respect to the measure $\mathbb{P}_m$. \\

As mentioned above, following standard arguments known from the theory of stochastic integrals with respect to a martingale (see e.g. Chapter 3 \cite{ChWi14}), we can extend the class of integrands for which the stochastic integral with respect to a martingale measure is defined to include bounded optional functions, as long as the measure $\nu(\omega,dt,dx)$ induced by the covariance is in $dt$ absolutely continuous w.r.t. to Lebesque measure $dt$ for almost all $\omega$.

\begin{Def}[Optional Functions]
The optional-$\sigma$-algebra $\mathcal{O}$ is the $\sigma$-algebra generated by functions of form
\begin{equation*}
f(\omega ,s,x) = X(\omega) 1_{[a , b)}(s) 1_A(x)
\end{equation*}
with $0 \leq a < b \leq T$, bounded and $\mathcal{F}_a$-measurable $X$ as well as  $A \in \mathcal{E}$. A function is called optional if it is $\mathcal{O}$-measurable. 
\end{Def}

Define a measure $\mu_M$ on $\mathcal{F} \times \mathcal{B}([0,T]) \times \mathcal{E}$ by \begin{equation*}
\mu_M (A_1 \times A_2 \times A_3) = \mathbb{E} \left[ c \int_0^T \int_E 1_{A_1 \times A_2 \times A_3} (\omega, s,x) X(\omega, s)(dx) ds\right] ,
\end{equation*}
where the expectation is with respect to $\mathbb{P}_m$. This is the extension of the so-called Doléans measure of $M$ to $\mathcal{F} \times \mathcal{B}([0,T]) \times \mathcal{E}$. Further, let $\mathcal{L}^2_\mathcal{P} = L^2(\Omega \times [0,T] \times E, \mathcal{P}, \mu_M)$ be the space of $\mathcal{P}$-measurable, i.e. predictable, functions satisfying $
\int f^2 d\mu_M < \infty.$ Then $\mathcal{L}^2_{\mathcal{P}}$ coincides with $\mathcal{P}_M$ for the $B(A,c)$-superprocess. Finally, denote the class of $\mu_M$-null sets in $\mathcal{F} \times \mathcal{B}([0,T]) \times \mathcal{E}$ by $\mathcal{N}$ and set $\tilde{\mathcal{P}} = \mathcal{P} \wedge \mathcal{N}$. In the remainder of this subsection we show that optional processes are $\tilde{\mathcal{P}}$-measurable and that $\mathcal{L}^2_{\mathcal{P}}= \mathcal{L}^2_{\tilde{\mathcal{P}}}$. To do so, we need the following proposition.

\begin{Prop} \label{AuxProp}
\begin{enumerate}[label = (\roman*)]
\item A subset $A$ of $\Omega \times [0,T] \times E$ belongs to $\tilde{\mathcal{P}}$ if and only if there exists a $B \in \mathcal{P}$ such that
\begin{equation*}
A \Delta B = (A \setminus B) \cup (B \setminus A) \in \mathcal{N}.
\end{equation*}
\item If $f : \Omega \times [0,T] \times E \rightarrow \mathbb{R}$ is $\mathcal{F} \times \mathcal{B}([0,T]) \times \mathcal{E}$-measurable, it is $\tilde{\mathcal{P}}$-measurable if and only if there exists a predictable function $g$ such that 
\begin{equation*}
\{ f \neq g\} \in \mathcal{N}.
\end{equation*}
\end{enumerate} 
\end{Prop}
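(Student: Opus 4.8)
The plan is to prove (i) first and then derive (ii) from (i) by the standard machinery of measurable functions, since $\tilde{\mathcal{P}} = \mathcal{P} \wedge \mathcal{N}$ is the $\sigma$-algebra generated by $\mathcal{P}$ together with the $\mu_M$-null sets (equivalently, the $\mu_M$-completion of $\mathcal{P}$ within $\mathcal{F}\times\mathcal{B}([0,T])\times\mathcal{E}$). For (i), I would show that the collection
\begin{equation*}
\mathcal{G} = \{ A \subseteq \Omega\times[0,T]\times E : \exists\, B \in \mathcal{P} \text{ with } A\Delta B \in \mathcal{N} \}
\end{equation*}
is a $\sigma$-algebra containing $\mathcal{P}$ and contained in $\tilde{\mathcal{P}}$. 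That $\mathcal{P} \subseteq \mathcal{G}$ is immediate (take $B = A$). For closure under complements, note $A^c \Delta B^c = A\Delta B$. For countable unions, if $A_n \Delta B_n \in \mathcal{N}$ with $B_n \in \mathcal{P}$, then $(\bigcup_n A_n)\Delta(\bigcup_n B_n) \subseteq \bigcup_n (A_n\Delta B_n) \in \mathcal{N}$ since $\mathcal{N}$ is closed under countable unions (being the null sets of a measure) and under taking subsets within $\mathcal{F}\times\mathcal{B}([0,T])\times\mathcal{E}$ — here one uses completeness of $\mu_M$, i.e.\ that every subset of a $\mu_M$-null set lying in $\mathcal{F}\times\mathcal{B}([0,T])\times\mathcal{E}$ is again null, which holds because $\mathcal{N}$ was defined as the class of all such null sets. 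Hence $\mathcal{G}$ is a $\sigma$-algebra with $\mathcal{P}\subseteq\mathcal{G}\subseteq\tilde{\mathcal{P}}$. Conversely $\mathcal{G}$ contains $\mathcal{N}$ (take $B=\emptyset$), so $\mathcal{G}\supseteq\sigma(\mathcal{P}\cup\mathcal{N}) = \tilde{\mathcal{P}}$, giving $\mathcal{G} = \tilde{\mathcal{P}}$, which is exactly the claim.

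For (ii), the ``if'' direction is routine: if $g$ is predictable and $\{f\neq g\}\in\mathcal{N}$, then for any Borel set $C\subseteq\mathbb{R}$ the sets $\{f\in C\}$ and $\{g\in C\}$ differ by a subset of $\{f\neq g\}$, hence by a $\mu_M$-null set; since $\{g\in C\}\in\mathcal{P}$, part (i) gives $\{f\in C\}\in\tilde{\mathcal{P}}$, so $f$ is $\tilde{\mathcal{P}}$-measurable. For the ``only if'' direction, I would first treat indicator functions: if $f = 1_A$ with $A\in\tilde{\mathcal{P}}$, part (i) furnishes $B\in\mathcal{P}$ with $A\Delta B\in\mathcal{N}$, and then $g = 1_B$ works since $\{1_A\neq 1_B\} = A\Delta B$. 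Extend to simple $\tilde{\mathcal{P}}$-measurable functions by linearity (a finite union of null sets is null). For general $\tilde{\mathcal{P}}$-measurable $f$, take a sequence $f_n$ of simple $\tilde{\mathcal{P}}$-measurable functions with $f_n\to f$ pointwise, pick predictable $g_n$ with $N_n := \{f_n\neq g_n\}\in\mathcal{N}$, set $N := \bigcup_n N_n \in \mathcal{N}$, and define $g = \limsup_n g_n 1_{\{g_n \text{ converges}\}}$, which is predictable; off $N$ one has $g_n = f_n \to f$, so $\{f\neq g\}\subseteq N \in \mathcal{N}$.

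The only genuinely delicate point — and the one I would be most careful about — is the handling of $\mathcal{N}$ and the completeness it encodes: the argument in (i) relies on subsets of $\mu_M$-null sets in $\mathcal{F}\times\mathcal{B}([0,T])\times\mathcal{E}$ being again in $\mathcal{N}$, so I would make explicit that $\mathcal{N}$ denotes \emph{all} such null sets (not merely those in $\mathcal{P}$) and that $\mathcal{F}\times\mathcal{B}([0,T])\times\mathcal{E}$ is large enough to be closed under the subset operations used. Everything else is the standard ``completion of a $\sigma$-algebra'' argument; the substance of the proposition is just recording it in the present measure-valued setting so that, in the next step, bounded optional processes can be identified $\mu_M$-a.e.\ with predictable ones and thus integrated against the martingale measure $M$.
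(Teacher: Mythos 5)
Your proposal is correct and follows essentially the same route as the paper's own proof: part (i) via showing the class $\{A : \exists B\in\mathcal{P},\ A\Delta B\in\mathcal{N}\}$ is a $\sigma$-algebra squeezed between $\sigma(\mathcal{P}\cup\mathcal{N})$ and $\tilde{\mathcal{P}}$ using the identities $A^c\Delta B^c=A\Delta B$ and $(\bigcup A_n)\Delta(\bigcup B_n)\subseteq\bigcup(A_n\Delta B_n)$, and part (ii) via indicators, simple functions, and a $\limsup/\liminf$ of predictable approximants. Your explicit remark that subsets of $\mu_M$-null sets arising here are again measurable (hence in $\mathcal{N}$) is a point the paper uses implicitly, so it is a welcome clarification rather than a deviation.
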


\begin{proof}
\begin{enumerate}[label = (\roman*)]
\item Set 
\begin{equation*}
\mathcal{A} = \{ A \ | \ \exists B \in \mathcal{P} \ \text{s.t.} \ A \Delta B \in \mathcal{N}  \}
\end{equation*}
As $C = A \Delta B$ if and only if $A = C \Delta B$ and since $C \in \mathcal{N} \subset \tilde{\mathcal{P}}$, $B \in \mathcal{P} \subset \tilde{\mathcal{P}}$, we have $A = C \Delta B \in \tilde{\mathcal{P}}$. Consequently $\mathcal{A} \subset \tilde{\mathcal{P}}$. \\

To prove that $\tilde{\mathcal{P}} \subset \mathcal{A}$ also holds, we only have to check if $\mathcal{A}$ is a $\sigma$-algebra since it contains $\mathcal{N}$ and $\mathcal{P}$. However, as 
\begin{itemize}
\item $\emptyset \in \mathcal{P}$, $\emptyset \in \mathcal{N}$, $\emptyset \Delta \emptyset = \emptyset$, we get that $\emptyset \in \mathcal{A}$,
\item $A^c \Delta B^c = A \Delta B$, we get $\mathcal{A}^c \in \mathcal{A}$ for all $A \in \mathcal{A}$,
\item $\left( \bigcup_{n} A_n \right) \Delta \left( \bigcup_n B_n \right) \subset \bigcup_n ( A_n \Delta B_n)$, we get that $\bigcup_n A_n \in \mathcal{A}$ for all $(A_n)_n \subset \mathcal{A}$,
\end{itemize}
this is the case. Thus $\tilde{\mathcal{P}} \subset \mathcal{A}$ and therefore $\tilde{\mathcal{P}} = \mathcal{A}$.
\item Let $g$ be predictable and assume $\{ f \neq g \} \in \mathcal{N}$. Then we have
\begin{equation*}
\{f \in S\} \Delta \{g \in S\} \subset \{f \neq g \} \quad \text{for all} \ S \in \mathcal{B}(\mathbb{R}).
\end{equation*}
By part (i), as $\{g \in S\} \in \mathcal{P}$, we get $\{f \in S\} \in \tilde{\mathcal{P}}$ and consequently $f$ is $\tilde{\mathcal{P}}$-measurable. \\

Now assume $f$ is $\tilde{\mathcal{P}}$-measurable and $f = \sum_{j=1}^\infty c_j 1_{A_j}$ for disjoint $A_j \in \tilde{\mathcal{P}}$ and $c_j \in \mathbb{R}$. For each $A_j$, there exists a $B_j \in \mathcal{P}$ such that $A_j \Delta B_j \in \mathcal{N}$ by part (i). As the $A_j$'s are disjoint, we have $B_i \cap B_j \in \mathcal{N}$ if $i \neq j$. To obtain disjoint sets, set 
\begin{equation*}
B'_1 = B_1 \quad \text{and} \quad B'_j = \left( \bigcap B_i^c \right) \cap B_j \ \text{for $j \geq 2$}.
\end{equation*}
Then $B_j \Delta B'_j \in \mathcal{N}$, $A_j \Delta B'_j \in \mathcal{N}$ and $B'_i \cap B'_j = \emptyset$ if $i \neq j$. Next, set $g = \sum_{j=1}^\infty c_j 1_{B'_j}$. Then $g$ is $\mathcal{P}$-measurable and 
\begin{equation*}
\{ f \neq g \} \subset \bigcup_{j=1}^\infty (A_j \Delta B'_j) \in \mathcal{N}.
\end{equation*}
For general $\tilde{\mathcal{P}}$-measurable $f$, there exists a sequence of $\tilde{\mathcal{P}}$-measurable functions $f^n$ of the above form such that $f^n \rightarrow f$ $\mu_M$-almost surely. Pick $\mathcal{P}$-measurable $g^n$ such that $\{f^n \neq g^n\} \in \mathcal{N}$ and set $g = \lim \inf_{n \rightarrow \infty} g^n$.  Then $g$ is also $\mathcal{P}$-measurable and
\begin{equation*}
\bigcup_{n = 1}^\infty \{ f^n \neq g^n \} \cup \{ \lim_{n \rightarrow \infty} f^n \neq f\} \in \mathcal{N} .
\end{equation*}
Set 
\begin{equation*}
\Sigma = (\Omega \times [0,T] \times E) \setminus \left(\bigcup_{n = 1}^\infty \{ f^n \neq g^n \} \cup \{ \lim_{n \rightarrow \infty} f^n \neq f\}\right) .
\end{equation*}
Obviously, the set $\Sigma$ has full mass, i.e. $(\Omega \times [0,T] \times E) \setminus \Sigma \in \mathcal{N}$, and on $\Sigma$ we have $f^n(\omega, s, x) = g^n(\omega, s, x)$ for all $n \in \mathbb{N}$. Therefore 
\begin{equation}
\liminf_{n \rightarrow \infty} f^n(\omega, s, x) = \liminf_{n \rightarrow \infty} g^n(\omega, s, x)
\end{equation} 
on $\Sigma$ and
\begin{align*}
\emptyset &= \{( \omega, s, x) \in \Sigma :  f(\omega , s, x) \neq \lim_{n \rightarrow \infty} f^n(\omega, s, x) \} \\
&= \{( \omega, s, x) \in \Sigma :  f(\omega , s, x) \neq \liminf_{n \rightarrow \infty} f^n(\omega, s, x) \}  \\
&= \{( \omega, s, x) \in \Sigma  :  f(\omega , s, x) \neq \liminf_{n \rightarrow \infty} g^n(\omega, s, x) \} .
\end{align*}
Thus $\{( \omega, s, x) \in \Sigma  :  f(\omega , s, x) \neq \liminf_{n \rightarrow \infty} g^n(\omega, s, x) \} \in \mathcal{N}$ and we obtain 
\begin{align*}
& \{ (\omega, s, x) \in \Omega \times [0,T] \times E : f(\omega, s, x) \neq \liminf_{n \rightarrow \infty} g^n(\omega, s,x)\} \\
\subset \   & \{( \omega, s, x) \in \Sigma  :  f(\omega , s, x) \neq \liminf_{n \rightarrow \infty} g^n(\omega, s, x) \} \cup (\Omega \times [0,T] \times E) \setminus \Sigma .
\end{align*}
As both sets on the right hand side are null sets, we get 
\begin{equation*}
\{ f \neq g\} \in \mathcal{N},
\end{equation*}
which completes the proof.
\end{enumerate}
\end{proof}

The above proposition allows us to prove the following.

\begin{Prop}
Any bounded optional function is $\tilde{\mathcal{P}}$-measurable.
\end{Prop}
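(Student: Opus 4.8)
The plan is to prove the slightly stronger statement that the optional $\sigma$-algebra $\mathcal{O}$ is contained in $\tilde{\mathcal{P}}$; once this is known, every $\mathcal{O}$-measurable function — bounded or not — is automatically $\tilde{\mathcal{P}}$-measurable. Since $\mathcal{O}$ is by definition generated by the functions $f(\omega,s,x)=X(\omega)1_{[a,b)}(s)1_A(x)$ with $0\le a<b\le T$, $X$ bounded and $\mathcal{F}_a$-measurable, and $A\in\mathcal{E}$, and since $\tilde{\mathcal{P}}$ is a $\sigma$-algebra, it suffices to check that each such generator $f$ is $\tilde{\mathcal{P}}$-measurable.

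The key point is that the Doléans measure $\mu_M$ is, in the time variable, absolutely continuous with respect to Lebesgue measure — exactly the structural property isolated in the paragraph preceding the Definition of optional functions, and visible from the explicit formula for $\mu_M$ (and from \eqref{eq.nu}), which carries the factor $ds$. Consequently every ``time slice'' $\Omega\times N\times E$ with $N\subset[0,T]$ Lebesgue-null satisfies $\mu_M(\Omega\times N\times E)=0$, i.e. lies in $\mathcal{N}$.

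Given a generator $f=X\,1_{[a,b)}\,1_A$ as above, I would compare it with $g=X\,1_{(a,b]}\,1_A$. The function $g$ is elementary in the sense of the Definition of elementary and simple functions, hence simple, hence $\mathcal{P}$-measurable (that is, predictable). Moreover $f$ and $g$ can differ only where $1_{[a,b)}(s)\neq 1_{(a,b]}(s)$, i.e. only for $s\in\{a,b\}$, so that
\[
\{f\neq g\}\ \subseteq\ \Omega\times\{a,b\}\times E,
\]
and the right-hand side belongs to $\mathcal{N}$ by the previous paragraph, since $\{a,b\}$ is Lebesgue-null. As $f$ is clearly $\mathcal{F}\times\mathcal{B}([0,T])\times\mathcal{E}$-measurable, Proposition \ref{AuxProp}(ii) applies and yields that $f$ is $\tilde{\mathcal{P}}$-measurable. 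Running this over all generators gives $\mathcal{O}\subseteq\tilde{\mathcal{P}}$, and hence every (bounded) optional function is $\tilde{\mathcal{P}}$-measurable.

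There is no genuinely hard step here; the only thing that requires attention is the observation just highlighted, namely that optionality and predictability of the generating functions can disagree only at the left/right endpoints of the defining time-intervals, and that these endpoints are invisible to $\mu_M$ precisely because $\mu_M$ charges time through $ds$. Absent this absolute continuity — as in the general $\mathbb{R}^d$-theory — the inclusion $\mathcal{O}\subseteq\tilde{\mathcal{P}}$ would fail, which is exactly why that assumption was singled out beforehand.
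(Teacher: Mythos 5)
Your proof is correct and is essentially the paper's own argument: compare the generator $X\,1_{[a,b)}\,1_A$ of $\mathcal{O}$ with the predictable function $X\,1_{(a,b]}\,1_A$, note that they differ only on a time slice that is $\mu_M$-null because $\mu_M$ charges time through $ds$, and conclude via Proposition \ref{AuxProp}(ii). Your version is if anything slightly more careful, since you account for both endpoints $\{a,b\}$ where the indicators can disagree.
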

\begin{proof}
 Let $f(\omega, s ,x) = X(\omega) 1_{[a,b)}(s) 1_A(x)$ and $g(\omega, s ,x) = X(\omega) 1_{(a,b]}(s) 1_A(x)$ with $0 \leq a < b \leq T$, $X$ $\mathcal{F}_a$-measurable and $A \in \mathcal{E}$. As $g$ is predictable, we get from Proposition \ref{AuxProp} that $f$ is $\tilde{\mathcal{P}}$-measurable if $\{ f \neq g\} \in \mathcal{N}$. This is the case as
\begin{align*}
\mu_M (\{ f \neq g \}) &= \mathbb{E} \left[ c \int_0^T \int_E 1_{\{ f \neq g \}} X(s)(dx) ds \right] \\
&= \mathbb{E} \left[ c \int_0^T \int_E 1_{\{ X 1_{[a,b)} (s) 1_{A}(x) \neq X 1_{(a,b]}(s) 1_A(x)\}} X(s)(dx) ds \right] \\
&= \mathbb{E} \left[ c \int_0^T \int_A 1_{\{ \Omega \times [a] \times A \}} X(s)(dx)ds \right] \\
&= \mathbb{E} \left[ c \int_0^T 1_{[a]} X(s)(A) ds \right] \\
&= 0.
\end{align*}
The last equality holds as $X(s)(A)$ is almost surely finite. Consequently $f$ is $\tilde{\mathcal{P}}$-measurable and, as functions of this form generate $\mathcal{O}$, we get that optional functions are $\tilde{\mathcal{P}}$-measurable.  
\end{proof}

 Since $\mathcal{P} \subset \tilde{\mathcal{P}}$, we have $\mathcal{L}_{\mathcal{P}}^2 \subset \mathcal{L}_{\tilde{\mathcal{P}}}^2$. From Proposition \ref{AuxProp} we get the existence of a function $g \in \mathcal{L}^2_{\mathcal{P}}$ for every $f \in \mathcal{L}^2_{\tilde{\mathcal{P}}}$ such that $f = g$ $\mu_M$-a.e.. Therefore the Hilbert spaces $\mathcal{L}_{\mathcal{P}}^2$ and $\mathcal{L}_{\tilde{\mathcal{P}}}^2$ are equal. Consequently, we can extend the stochastic integral with respect to the martingale measure associated with the $B(A,c)$-superprocess to square-integrable, $\tilde{\mathcal{P}}$-measurable integrands, which include square-integrable optional functions. The definition of the integral as the limit of integrals of simple functions is still valid and for simplicity, we also denote the extended class of integrands by $\mathcal{P}_M$. \\

Extending the class of integrands to include square-integrable optional functions is of interest because one can show that $\sigma(r.c.l.l) = \mathcal{O}$, where $r.c.l.l.$ are the adapted right continuous functions with left limits. Being able to define the stochastic integral with respect to the martingale measure for adapted right continuous processes with left limits is crucial to prove Theorem \ref{Thm.fct.Ito}. As we are only interested in the fact that $\sigma(r.c.l.l.) \subset \mathcal{O}$ and as the other inclusion follows immediately from the definition of $\mathcal{O}$, we only show the following.

\begin{Prop} It holds: 
\begin{enumerate}[label = (\roman*)]
\item  If $f$ is $\mathcal{F}_a \times \mathcal{E}$-measurable, then $f 1_{[a,b)}$ is optional for all $0 \leq a < b \leq T$.
\item $\sigma(r.c.l.l.) \subset \mathcal{O}$.
\end{enumerate}
\end{Prop}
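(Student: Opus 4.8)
The plan is to prove both parts by reducing to the generating functions of the optional $\sigma$-algebra and exploiting the fact that the defining functions of $\mathcal{O}$ in the paper, namely $X(\omega)1_{[a,b)}(s)1_A(x)$ with $X$ being $\mathcal{F}_a$-measurable, already have the shape we need.

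For part (i), suppose $f:\Omega\times E\to\mathbb{R}$ is $\mathcal{F}_a\times\mathcal{E}$-measurable. First I would treat the case where $f=X(\omega)1_A(x)$ is a product indicator with $X$ bounded and $\mathcal{F}_a$-measurable and $A\in\mathcal{E}$; then $f1_{[a,b)}(\omega,s,x)=X(\omega)1_{[a,b)}(s)1_A(x)$ is exactly a generator of $\mathcal{O}$, hence optional. Next I would pass to general $f$ by a monotone-class / functional-monotone-class argument: the collection of bounded $\mathcal{F}_a\times\mathcal{E}$-measurable $f$ for which $f1_{[a,b)}$ is optional is a vector space, contains the constants and the product indicators above, is closed under bounded monotone limits (since optionality is preserved under pointwise limits of measurable functions and multiplying by the fixed indicator $1_{[a,b)}$ commutes with taking limits), and the product indicators form a $\pi$-system generating $\mathcal{F}_a\times\mathcal{E}$; hence it contains all bounded such $f$, and the unbounded case follows by truncation. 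A small point to check is that the product indicators $1_B(\omega)1_A(x)$ with $B\in\mathcal{F}_a$ — not just those with bounded $\mathcal{F}_a$-measurable coefficient — are covered, but this is immediate since $1_B$ is itself bounded and $\mathcal{F}_a$-measurable.

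For part (ii), recall that $\sigma(r.c.l.l.)$ is by definition the $\sigma$-algebra on $\Omega\times[0,T]\times E$ generated by the adapted processes $g(\omega,s,x)$ that are right-continuous with left limits in $s$ (for each fixed $x$, or jointly — either reading works for the inclusion we want). Fix such a $g$; I would approximate it from the right by the dyadic step processes
\begin{equation*}
g^n(\omega,s,x)=\sum_{k=0}^{2^n-1} g\!\left(\omega,\tfrac{k+1}{2^n}T,x\right)1_{[\frac{k}{2^n}T,\frac{k+1}{2^n}T)}(s),
\end{equation*}
wait — to keep adaptedness I should instead evaluate at the left endpoint of each dyadic interval,
\begin{equation*}
g^n(\omega,s,x)=\sum_{k=0}^{2^n-1} g\!\left(\omega,\tfrac{k}{2^n}T,x\right)1_{[\frac{k}{2^n}T,\frac{k+1}{2^n}T)}(s),
\end{equation*}
so that the coefficient $g(\omega,\tfrac{k}{2^n}T,x)$ is $\mathcal{F}_{k2^{-n}T}\times\mathcal{E}$-measurable and, by part (i), each summand $g(\cdot,\tfrac{k}{2^n}T,\cdot)1_{[\frac{k}{2^n}T,\frac{k+1}{2^n}T)}$ is optional; hence $g^n$ is optional. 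By right-continuity of $s\mapsto g(\omega,s,x)$ we have $g^n(\omega,s,x)\to g(\omega,s,x)$ pointwise as $n\to\infty$, so $g$ is $\mathcal{O}$-measurable as a pointwise limit of optional functions. Therefore every generator of $\sigma(r.c.l.l.)$ is $\mathcal{O}$-measurable, which gives $\sigma(r.c.l.l.)\subset\mathcal{O}$.

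The only genuinely delicate point is the right-continuity argument in part (ii): one must make sure the dyadic grid is refined on the right (left-endpoint evaluation on half-open intervals $[\cdot,\cdot)$) so that right-continuity — not left-continuity — is what delivers the pointwise convergence, and simultaneously that adaptedness is retained so part (i) applies; these two requirements are compatible precisely because the intervals are of the form $[k2^{-n}T,(k+1)2^{-n}T)$. Everything else is a routine monotone-class bookkeeping exercise, and I would state it as such rather than spelling out each step. One should also note, as the paragraph preceding the proposition already does, that the reverse inclusion $\mathcal{O}\subset\sigma(r.c.l.l.)$ is immediate because the generators $X(\omega)1_{[a,b)}(s)1_A(x)$ of $\mathcal{O}$ are themselves adapted and right-continuous with left limits in $s$, so in fact $\sigma(r.c.l.l.)=\mathcal{O}$, though only the stated inclusion is needed.
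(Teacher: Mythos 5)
Your proof is correct and follows essentially the same route as the paper: product indicators $1_F(\omega)1_A(x)$ plus a monotone-class/approximation argument for (i), and left-endpoint evaluation on half-open partition intervals combined with part (i) and a pointwise limit for (ii). One caveat you share with the paper's own proof: with left-endpoint evaluation the approximants $g^n(\omega,s,x)=g(\omega,t^n_{i_n(s)},x)$ converge to the left limit $g(\omega,s-,x)$ at any jump time $s$ that is never a grid point (it is left-, not right-continuity that drives this limit), so strictly one should either remark that the exceptional set is a countable union of jump-time graphs or use a stopping-time grid adapted to the jumps; for the continuous-path integrands to which the proposition is applied in this paper the issue is immaterial.
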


\begin{proof}
\begin{enumerate}[label = (\roman*)]
\item As $f$ is $\mathcal{F}_a \times \mathcal{E}$-measurable, we can approximate it pointwise by sums of indicator functions $1_{A \times F}$, $F \in \mathcal{F}_a$, $A \in \mathcal{E}$. Therefore, it is enough to consider $f = 1_{F \times A}$. As we can write
\begin{equation*}
f 1_{[a,b)}(\omega, s, x) = 1_F(\omega) (\omega) 1_{[a,b)}(s) 1_A(x),
\end{equation*}
the function $f 1_{[a,b)}$ is optional as $X(\omega) = 1_F(\omega)$ is $\mathcal{F}_a$-measurable.
\item Let $f$ be an adapted r.c.l.l. function. Consider a partition $\{t_i^n : 0 \leq i \leq n , 1 \leq n \leq \infty\}$ with $t_0^n = 0$, $t_n^n = T$, $t_i^n < t_{i+1}^n$ for all $n$ and $\max_i | t_{i+1}^n - t_i^n | \searrow 0$ as $n \rightarrow \infty$. Further, define an approximation of the function $f$ by 
\begin{equation*}
App^n(f)(\omega, s, x) = \sum_{i=0}^{n-1} f(\omega, t_i^n , x) 1_{[t_i^n, t_{i+1}^n)}(s) .
\end{equation*}
This approximation converges pointwise to $f$ and as every summand of the above is optional by part (i) so is the approximation and thus the limit $f$ is also optional. 
\end{enumerate}
\end{proof}

\subsection{The It\={o}-Formula}

Before getting to the more general case, let us first prove our main result for a simple class of functions before presenting the more general result in Theorem \ref{Thm.Ito}. 

\begin{Thm} \label{Thmfb}
Let the function $F:[0, T] \times M_F(E) \rightarrow \mathbb{R}$ be finitely based, i.e. there exist a bounded $f \in C^2([0,T] \times \mathbb{R}^{n}, \mathbb{R})$ with bounded derivatives and $\phi_1 , \ldots , \phi_n \in D(A)$ such that 
\begin{equation*}
F(t, \mu) = f(t, \langle\mu , \phi_1 \rangle , \ldots , \langle\mu , \phi_n \rangle ) .
\end{equation*}
Further let $A$ be a good generator and $X$ solves \eqref{MP}. Then the following It\={o}-type formula holds:
\begin{align*}
\begin{split}
F(t, X(t)) - F(0 , X(0)) = \quad &\hphantom{\frac{1}{2}}   \int_0^t D_s F(s, X(s)) ds \\
+ \ & \hphantom{\frac{1}{2}} \int_0^t \int_E A^{(x)} D_x F(s, X(s)) X(s)(dx) ds \\
+ \ & \frac{1}{2} \int_0^t \int_E c D_{xx} F(s, X(s)) X(s)(dx) ds \\
+ \ &\hphantom{\frac{1}{2}}  \int_0^t \int_E  D_x F(s, X(s)) M(dx, ds).
\end{split} 
\end{align*}
\end{Thm}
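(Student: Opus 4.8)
The plan is to reduce the statement to the classical finite-dimensional It\^o formula applied to the $\mathbb{R}^n$-valued process obtained by pairing $X$ with the test functions, and then to rewrite each resulting term in measure-valued language by means of the chain rule for the directional derivative $D_x$. Concretely, I would set $Y(t)=(\langle X(t),\phi_1\rangle,\dots,\langle X(t),\phi_n\rangle)$, so that $F(t,X(t))=f(t,Y(t))$. By the martingale problem \eqref{MP} each coordinate is a continuous real semimartingale,
\begin{equation*}
\langle X(t),\phi_i\rangle=\langle X(0),\phi_i\rangle+\int_0^t\langle X(s),A\phi_i\rangle\,ds+M(t)(\phi_i),
\end{equation*}
where, as recalled in Section~\ref{SecMM}, $M(t)(\phi_i)=\int_0^t\int_E\phi_i(x)\,M(ds,dx)$ is a true square-integrable martingale and $[M(\phi_i),M(\phi_j)]_t=c\int_0^t\langle X(s),\phi_i\phi_j\rangle\,ds$. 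Since $f$ is $C^2$ with bounded derivatives, the classical It\^o formula expresses $f(t,Y(t))-f(0,Y(0))$ as the sum of $\int_0^t\partial_tf(s,Y(s))\,ds$, the first-order part $\sum_{i}\int_0^t\partial_{y_i}f(s,Y(s))\,d\langle X(s),\phi_i\rangle$, and the second-order part $\tfrac12\sum_{i,j}\int_0^t\partial_{y_iy_j}f(s,Y(s))\,d[M(\phi_i),M(\phi_j)]_s$.

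Next I would insert the elementary identities for the directional derivatives. From $\langle\mu+\varepsilon\delta_x,\phi_i\rangle=\langle\mu,\phi_i\rangle+\varepsilon\phi_i(x)$ and the chain rule one obtains $D_xF(s,\mu)=\sum_i\partial_{y_i}f(s,\langle\mu,\phi_1\rangle,\dots)\,\phi_i(x)$ and $D_{xx}F(s,\mu)=\sum_{i,j}\partial_{y_iy_j}f(s,\langle\mu,\phi_1\rangle,\dots)\,\phi_i(x)\phi_j(x)$, while trivially $\partial_tf(s,Y(s))=D_sF(s,X(s))$. For fixed $(s,\omega)$ the coefficients $\partial_{y_i}f(s,Y(s))$ are constants, so $x\mapsto D_xF(s,X(s))=\sum_i\partial_{y_i}f(s,Y(s))\phi_i(x)$ is a finite linear combination of elements of $D(A)$ and therefore lies in $D(A)$; by linearity of $A$, $\sum_i\partial_{y_i}f(s,Y(s))\langle X(s),A\phi_i\rangle=\langle X(s),A^{(x)}D_xF(s,X(s))\rangle$, turning the first-order drift into $\int_0^t\int_E A^{(x)}D_xF(s,X(s))\,X(s)(dx)\,ds$. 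Likewise $\tfrac12\sum_{i,j}\partial_{y_iy_j}f(s,Y(s))\,c\langle X(s),\phi_i\phi_j\rangle=\tfrac12\langle X(s),cD_{xx}F(s,X(s))\rangle$, which is the claimed second-order term.

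It remains to identify the stochastic part $\sum_i\int_0^t\partial_{y_i}f(s,Y(s))\,dM(s)(\phi_i)$ with $\int_0^t\int_E D_xF(s,X(s))\,M(dx,ds)$. Each integrand $H_i(s):=\partial_{y_i}f(s,Y(s))$ is bounded (the derivatives of $f$ are bounded), adapted and path-continuous, hence predictable; and since $\mathbb{E}_m[\langle X(s),1\rangle]$ is bounded on $[0,T]$, the product $H_i(s)\phi_i(x)$ satisfies the integrability condition \eqref{eq.finit} and is thus an admissible integrand. I would then invoke the ``associativity'' of the stochastic integral with respect to the martingale measure, i.e. that $\int_0^t H_i(s)\,dM(s)(\phi_i)=\int_0^t\int_E H_i(s)\phi_i(x)\,M(ds,dx)$; this is immediate for elementary $H_i$ and passes to the general case through the $L^2$-isometry of Proposition~\ref{Prop.Wal.2}. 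Summing over $i$ and collecting the four pieces yields the asserted formula.

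The conceptual content is entirely carried by the classical It\^o formula and the linear chain rule for $D_x$; the genuinely delicate step is the last one, namely moving the adapted bounded factor $\partial_{y_i}f(s,Y(s))$ inside the martingale-measure integral $\int\int\phi_i\,M(ds,dx)$ and checking admissibility of the integrand. That is where I would spend the most care, handling it on elementary integrands first and extending by the $L^2$-continuity of the integral established in Section~\ref{SecMM}.
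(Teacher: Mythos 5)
Your proposal is correct and follows essentially the same route as the paper: apply the classical It\^o formula to the $\mathbb{R}^n$-valued semimartingale $(\langle X(t),\phi_i\rangle)_i$ obtained from \eqref{MP}, then identify each term with the measure-valued expressions via the chain rule $D_xF(s,\mu)=\sum_i\partial_if(\cdot)\phi_i(x)$ and the covariation structure of the martingale measure. Your added care in justifying the passage of the adapted factor $\partial_{y_i}f(s,Y(s))$ inside the martingale-measure integral (via elementary integrands and the $L^2$-isometry) is a point the paper treats only briefly, but it is not a different method.
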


\begin{proof}
As $F$ is finitely based and as the $\langle X(t) , \phi_i \rangle$ are semimartingales, the traditional It\={o}-formula for semimartingales yields, with $\partial_i f$ denoting partial derivatives,
\begin{align}
\begin{split}
&f(t, \langle X(t) , \phi_1 \rangle, \ldots , \langle X(t) , \phi_n \rangle) \\
= \quad & f(0, \langle X(0) , \phi_1 \rangle, \ldots , \langle X(0) , \phi_n \rangle) \\
+ & \int_0^t \partial_s f(s, \langle X(s) , \phi_1 \rangle, \ldots , \langle X(s) , \phi_n \rangle) ds \\
+ & \int_0^t \sum_{i = 1}^n \partial_i f(s, \langle X(s) , \phi_1 \rangle, \ldots , \langle X(s) , \phi_n \rangle) d\langle X(t) , \phi_i \rangle \\
+ & \int_0^t \sum_{i,j = 1}^n \partial_{ij}f(s, \langle X(s) , \phi_1 \rangle, \ldots , \langle X(s) , \phi_n \rangle) d[\langle X , \phi_i \rangle, \langle X , \phi_j \rangle]_s .
\end{split}
\label{itosem}
\end{align}
As $X$ is a solution to \eqref{MP}, 
\begin{equation*}
\langle X(s), \phi_i \rangle = M(s)(\phi_i) + \langle X(0) , \phi_i \rangle + \int_0^s \langle X(r) , A \phi_i \rangle dr 
\end{equation*}
and, as $\langle X(0) , \phi_i \rangle$ is constant, 
\begin{align*}
d\langle X(s), \phi_i \rangle =  d(M(s)(\phi_i))  +  \langle X(r) , A \phi_i \rangle dr.
\end{align*}
By the properties of the martingale measure it holds 
\begin{equation*}
M(t)(\phi) = \int_0^t \int_E \phi(x) M(ds, dx)
\end{equation*}
and 
\begin{equation*}
[M(\phi_1), M(\phi_2)]_t = \int_0^t \langle X(s), c \phi_1 \phi_2 \rangle ds
\end{equation*}
and thus 
\begin{equation*}
d\langle X(s) , \phi_i \rangle  =  \int_E \phi_i(x) M(ds, dx) +  \langle X(s) , A \phi_i \rangle ds
\end{equation*}
and
\begin{equation*}
d [\langle X , \phi_i \rangle, \langle X , \phi_j \rangle]_s = \langle X(s), c \phi_i \phi_j \rangle ds.
\end{equation*}
Plugging these terms into \eqref{itosem} yields
\begin{align*}
&f(t, \langle X(t) , \phi_1 \rangle, \ldots , \langle X(t) , \phi_n \rangle) \\
= \quad & f(0, \langle X(0) , \phi_1 \rangle, \ldots , \langle X(0) , \phi_n \rangle) \\
+ & \int_0^t \partial_s f(s, \langle X(s) , \phi_1 \rangle, \ldots , \langle X(s) , \phi_n \rangle) ds \\
+ & \int_0^t \sum_{i = 1}^n \partial_i f(s, \langle X(s) , \phi_1 \rangle, \ldots , \langle X(s) , \phi_n \rangle) \int_E \phi_i (x) M(ds, dx) \\
+ & \int_0^t \sum_{i = 1}^n \partial_i f(s, \langle X(s) , \phi_1 \rangle, \ldots , \langle X(s) , \phi_n \rangle) \langle X(s) , A \phi_i \rangle ds \\
+ & \int_0^t \sum_{i,j = 1}^n \partial_{ij}f(s, \langle X(s) , \phi_1 \rangle, \ldots , \langle X(s) , \phi_n \rangle) \langle X(s), c \phi_1 \phi_2 \rangle ds .
\end{align*}
The result now follows by computing the directional derivatives of finitely based functions as in the statement of the theorem and identification with the expression above. For the linear function $F_\phi (\mu) = \langle \phi, \mu \rangle$, $D_xF_\phi(\mu) = \phi(x)$ holds, which yields 
\begin{equation*}
\langle \mu , A \phi \rangle  = \langle \mu , AD_\cdot F_\phi \rangle .
\end{equation*}
Using this and the directional derivatives, we obtain from the chain rule of ordinary differentiation, 
\begin{equation*}
D_xF(t, \mu) = \sum_{i=1}^n \partial_i f(s, y_1 , \ldots, y_n)  | _{y_1 = \langle \mu , \phi_1 \rangle , \ldots , y_n = \langle \mu , \phi_n \rangle }\phi_i(x) ,
\end{equation*}
that, for example, 
\begin{align*}
&\int_0^t \int_E A (D F(s, X(s)))(x) X(s)(dx) ds \\
= \ & \int_0^t \int_E A \left(\sum_{i= 1}^n \partial_i f(s, \langle X(s) , \phi_1 \rangle, \ldots , \langle X(s) , \phi_n \rangle)(D F_{\phi_i}(X(s))\right)(x) X(s) (dx) ds \\
= \ & \int_0^t \sum_{i=1}^n \partial_i f(s, \langle X(s) , \phi_1 \rangle, \ldots , \langle X(s) , \phi_n \rangle) \int_E A \phi_i(x) X(s) (dx) ds \\
= \ & \int_0^t \sum_{i=1}^n  \partial_i f(s, \langle X(s) , \phi_1 \rangle, \ldots , \langle X(s) , \phi_n \rangle) \langle X(s), A \phi_i \rangle ds
\end{align*}
and
\begin{align*}
& \int_0^t \int_E D_x F(s, X(s)) M(ds, dx) \\
= \ & \int_0^t \int_E \sum_{i=1}^n \partial_i f(s, \langle X(s) , \phi_1 \rangle, \ldots , \langle X(s) , \phi_n \rangle) (D_x \langle X(s), \phi_i \rangle) M(ds, dx) \\
= \ & \int_0^t \sum_{i=1}^n\partial_i f(s, \langle X(s) , \phi_1 \rangle, \ldots , \langle X(s) , \phi_n \rangle) \int_E \phi_i(x) M(ds, dx) .
\end{align*}
The integral is well defined as $\partial_i f$ and $\phi$ are continuous and bounded. The equivalence of the remaining terms follows equivalently.
\end{proof}

Now, for functions $F : [0,T] \times M_F(E) \rightarrow \mathbb{R}$ define the following set of conditions. 
\begin{Cond}\label{cond1}
\begin{enumerate}[label = (\roman*)]
\item $F(s, \mu)$, $D_x F(s, \mu)$, $D_{xy}F(s, \mu)$, $D_{xyz}F(s, \mu)$, $D_s F(s, \mu)$, $D_{sx}F(s, \mu)$, $D_{sxy}F(s, \mu)$ and $D_{sxyz}F(s, \mu)$ exists and are continuous in $x$, $y$, $z$, $\mu$ and $s$, 
\item for fixed $s$, $y$, $z$ and $\mu$, the maps $x \mapsto D_x F(s, \mu)$, $x \mapsto D_{xy}F(s , \mu)$ and $x \mapsto D_{xyz} F(s , \mu)$ are in the domain of the generator $A$, 
\item $A^{(x)} D_x F(s,\mu)$, $A^{(x)} D_{xy} F(s,\mu)$ and $A^{(x)}D_{xyz}F(s ,\mu)$ are continuous in $s$, $x$, $y$, $z$ and $\mu$.
\end{enumerate}
\end{Cond}

For a processes $X$ satisfying \eqref{MP} and such functions $F$, we have have the following It\={o}-formula.

\begin{Thm} \label{Thm.Ito}
If $X$ is a solution to the martingale problem \eqref{MP}, $A$ is a good generator and $F$ satisfies Conditions \ref{cond1}, then
\begin{align}
\begin{split}
F(t, X(t)) - F(0 , X(0)) = \quad &\hphantom{\frac{1}{2}}   \int_0^t D_s F(s, X(s)) ds \\
+ \ & \hphantom{\frac{1}{2}} \int_0^t \int_E A^{(x)} D_x F(s, X(s)) X(s)(dx) ds \\
+ \ & \frac{1}{2} \int_0^t \int_E c D_{xx} F(s, X(s)) X(s)(dx) ds \\
+ \ &\hphantom{\frac{1}{2}}  \int_0^t \int_E  D_x F(s, X(s)) M(dx, ds).
\end{split} 
\label{SP-Ito}
\end{align}
\end{Thm}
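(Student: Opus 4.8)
The plan is to pass from the finitely-based case already established in Theorem \ref{Thmfb} to the general case by approximation. The idea is that the class of functions satisfying Conditions \ref{cond1} can be reached from finitely-based functions through a suitable limiting procedure, and that each term in the It\={o}-formula \eqref{SP-Ito} is stable under this limit. First I would recall, following \cite{JT03}, that the extended domain of the generator of the superprocess consists precisely of functions $F$ with the regularity prescribed in Conditions \ref{cond1}, and that for such $F$ one has a martingale problem statement: the process
\begin{equation*}
F(t,X(t)) - F(0,X(0)) - \int_0^t \mathbb{L} F(s,X(s))\, ds
\end{equation*}
is a martingale, where $\mathbb{L} F(s,\mu) = D_s F(s,\mu) + \langle \mu, A^{(x)} D_x F(s,\mu)\rangle + \tfrac{1}{2}\langle \mu, c\, D_{xx}F(s,\mu)\rangle$. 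Thus the bounded-variation part of the semimartingale $F(t,X(t))$ is already identified; what remains is to identify the martingale part as the stochastic integral $\int_0^t\int_E D_x F(s,X(s))\, M(dx,ds)$ against the orthogonal martingale measure $M$.

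The key step is therefore the identification of the martingale part. Write $N(t) := F(t,X(t)) - F(0,X(0)) - \int_0^t \mathbb{L}F(s,X(s))\,ds$ for the martingale furnished by \cite{JT03}, and $Z(t) := \int_0^t\int_E D_x F(s,X(s))\, M(dx,ds)$; note $Z$ is well-defined since $(s,x)\mapsto D_xF(s,X(s))$ is continuous, adapted, hence (by the results of Section \ref{SecMM}) an admissible integrand, and the integrability \eqref{eq.finit} holds because $D_xF$ is bounded on the relevant range by continuity and compactness. To show $N = Z$ it suffices, since both are square-integrable martingales, to show their difference is orthogonal to a large enough family of martingales — or more directly, to compute $[N, M(\phi)]$ and $[Z,M(\phi)]$ for $\phi \in D(A)$ and check they agree, then invoke that $\{M(\phi):\phi\in D(A)\}$ together with constants generates the relevant martingale space (this is essentially the predictable representation property alluded to in the introduction, but here I only need the covariation identity). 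To compute $[N,M(\phi)]$ I would apply Theorem \ref{Thmfb}, or rather the two-function version of It\={o}'s formula, to the product $F(t,X(t))\langle X(t),\phi\rangle$ — but $F$ is not finitely based, so instead I would approximate.

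Concretely, the approximation step: fix a countable set $\{\psi_k\}\subset D_0$ dense in $C(E)$, and for each $n$ define a finitely-based $F^{(n)}$ that agrees with $F$ to the needed order when $\mu$ is tested against $\psi_1,\dots,\psi_n$ — for instance via conditional-expectation-type smoothing of $F$ against the coordinates $\langle\mu,\psi_k\rangle$, arranged so that $F^{(n)}$, $D_xF^{(n)}$, $D_{xx}F^{(n)}$ and $A^{(x)}D_xF^{(n)}$ converge to their counterparts for $F$ uniformly on compact sets, using Conditions \ref{cond1}(i)--(iii). Theorem \ref{Thmfb} applies to each $F^{(n)}$; the bounded-variation terms converge by dominated convergence (using boundedness from the continuity-plus-compactness argument and that $\sup_{s\le T}\langle X(s),1\rangle < \infty$ a.s., indeed in $L^2$), and the stochastic integral terms converge in $L^2$ because $\|D_xF^{(n)}(s,X(s)) - D_xF(s,X(s))\|_M \to 0$ by Proposition \ref{Prop.Wal.2}(ii) and dominated convergence with respect to $\mu_M$. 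Passing to the limit in the identity of Theorem \ref{Thmfb} for $F^{(n)}$ yields \eqref{SP-Ito} for $F$.

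The main obstacle I anticipate is the construction of the finitely-based approximants $F^{(n)}$ with simultaneous control of $F$ and its first, second and $A$-composed directional derivatives — the directional derivative $D_x$ interacts with the finite-dimensional smoothing in a way that must be checked carefully, and one must ensure the approximants remain bounded with bounded derivatives so that Theorem \ref{Thmfb} genuinely applies. An alternative that sidesteps this is to bypass Theorem \ref{Thmfb} entirely and argue as above purely via \cite{JT03} plus a covariation computation: the only genuinely new content is then that $[N,M(\phi)]_t = \int_0^t \langle X(s), c\,\phi\, D_\cdot F(s,X(s))\rangle\, ds$, which can be obtained by applying the \cite{JT03} martingale property to the product function $(s,\mu)\mapsto F(s,\mu)\langle\mu,\phi\rangle$ (which again satisfies Conditions \ref{cond1}, with $D_x$ of the product computed by the Leibniz rule) and subtracting the martingale pieces already identified. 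I would present whichever of these two routes turns out to require less ad hoc regularity bookkeeping, but expect the covariation route to be cleaner.
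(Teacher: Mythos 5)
Your primary route is essentially the paper's proof: invoke Theorem 3 of \cite{JT03} to identify the bounded-variation part, then identify the martingale part by approximating $F$ with finitely based functions $F^n$, applying Theorem \ref{Thmfb} to each $F^n$, and passing to the limit after localizing to $M_K(E)=\{\mu : \langle\mu,1\rangle\le K\}$ via the stopping time $\tau=\inf\{t:\langle X(t),1\rangle\ge K\}$. The obstacle you correctly flag --- constructing approximants with simultaneous control of $F$ and its derivatives --- is resolved in the paper not by a new construction but by observing that the proof of Theorem 3 in \cite{JT03} already produces such $F^n$ with uniform convergence (on $[0,T]\times M_K(E)$) of $F^n$, of $D_{xy}F^n$, and of the combined drift term $D_tF^n+\int_E A^{(z)}D_zF^n\,\mu(dz)$. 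One point your sketch glosses over: uniform convergence of $D_xF^n$ itself, which is what you need for $\|D_xF^n-D_xF\|_M\to 0$ and hence for the stochastic integrals to converge, is \emph{not} directly supplied; the paper deduces it from the convergence of the second derivatives via Lemma 4 of \cite{JT03}, namely the representation $G(\mu)=G(0)+\int_0^1\int_E D_xG(\theta\mu)\,\mu(dx)\,d\theta$ applied to $G=D_xF-D_xF^n$. Your alternative covariation route is a genuinely different argument, but as you yourself note it requires that $\{M(\phi)\}$ generate the martingale space, i.e.\ the predictable representation property; in this paper that property is a downstream \emph{application} of the It\={o}-formula (Theorem \ref{ThmMR}), so using it here would import a heavier external result from \cite{EP94}/\cite{Ov95} and invert the paper's logical order. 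There is also a minor technical snag in that route: $(s,\mu)\mapsto F(s,\mu)\langle\mu,\phi\rangle$ is unbounded on $M_F(E)$, so it does not satisfy Conditions \ref{cond1} as stated without the same $M_K(E)$ localization. Neither issue is fatal, but the approximation route is the cleaner and is the one the paper takes.
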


\begin{proof}
From Theorem 3 in \cite{JT03} we get for $t \in [0,T]$
\begin{align*}
F(t, X(t)) - F(0 , X(0)) = \quad & \hphantom{\frac{1}{2}} \int_0^t D_s F(s, X(s)) ds \\
+  \ & \hphantom{\frac{1}{2}}  \int_0^t \int_E A^{(x)} D_x F(s, X(s)) X(s)(dx) ds \\
+ \ & \frac{1}{2} \int_0^t \int_E c D_{xx} F(s, X(s)) X(s)(dx) ds \\
+  \ & \hphantom{\frac{1}{2}}  \tilde{M} ,
\end{align*}
where $\tilde{M}$ is a local martingale. \\ 

It remains to be proved that $\tilde{M}$ has the stated form as an integral with respect to the martingale measure. From the proof in \cite{JT03}, we get the existence of finitely based functions $F^n$ such that the following holds with $M_K(E) = \{ \mu \in M_F(E) \ : \ \langle \mu , 1 \rangle \leq K \}$.
\begin{align}
\begin{split}
\sup_{t \in [0,T], \ \mu \in M_K(E)} &| F(t , \mu) - F^n(t, \mu)| \rightarrow 0, \\ 
\sup_{t \in [0,T], \ x,y \in E , \ \mu \in M_K(E)} &|  D_{xy} F(t , \mu) - D_{xy} F^n(t, \mu)| \rightarrow 0, \\
\sup_{t \in [0,T], \ \mu \in M_K(E)} &| D_t F(t, \mu) + \int_E A^{(z)} D_z F(t,\mu) \mu(dz) \\
 & \ - D_t F^n(t, \mu) - \int_E A^{(z)} D_z F^n(t,\mu) \mu(dz) | \rightarrow 0 .
\end{split}
\label{ConvFromJT}
\end{align}
To see that it suffices to consider the case $\mu \in M_K(E)$, consider the stopping time $\tau  = \inf \{t \geq 0 : \langle X(t) , 1 \rangle \geq K \}$. Then, the result holds when we replace $t$ by $t \wedge \tau$ in the result and as $\tau \rightarrow \infty$ if $K \rightarrow \infty$, we get that $t \wedge \tau \rightarrow t $ for all $t \in [0,T]$ as $K \rightarrow \infty$. \\ 

As $F^n$ is finitely based, we have from the It\={o}-formula for finitely based functions (Theorem \ref{Thmfb})
\begin{align}
\begin{split}
F^n(t, X(t)) - F^n(0 , X(0)) = \quad & \hphantom{\frac{1}{2}}  \int_0^t D_s F^n(s, X(s)) ds \\
+   \ & \hphantom{\frac{1}{2}} \int_0^t \int_E A^{(x)} D_x F^n(s, X(s)) X(s)(dx) ds \\
+ \ & \frac{1}{2} \int_0^t \int_E c D_{xx} F^n(s, X(s)) X(s)(dx) ds \\
+  \ & \hphantom{\frac{1}{2}} \int_0^t \int_E  D_x F^n(s, X(s)) M(dx, ds).
\end{split}
\label{eq1proof2}
\end{align}

Combining \eqref{ConvFromJT} and \eqref{eq1proof2}, we get
\begin{align*}
F(t, X(t)) &= \lim_{n \rightarrow \infty} F^n(t, X(t)) \\
&= \lim_{n \rightarrow \infty} \left( \vphantom{\int_0^1} \right. F^n(0, X(0)) \\
& \hphantom{\lim_{n \rightarrow \infty} (} + \int_0^t  D_s F^n(s, X(s)) + \int_E A^{(x)} D_x F^n(s, X(s)) X(s)(dx) ds \\
& \hphantom{\lim_{n \rightarrow \infty} (} + \frac{1}{2} \int_0^t \int_E c D_{xx} F^n(s, X(s)) X(s)(dx)ds \\
& \hphantom{\lim_{n \rightarrow \infty} (} + \int_0^t \int_E D_x F^n(s, X(s)) X(s)(dx)ds \left. \vphantom{\int_0^1}  \right) \\
&=F(0, X(0)) + \int_0^t  D_s F(s, X(s)) ds  \\
& \hphantom{\lim_{n \rightarrow \infty} (} + \int_0^t \int_E A^{(x)} D_x F(s, X(s)) X(s)(dx) ds \\
& \hphantom{\lim_{n \rightarrow \infty} (} + \frac{1}{2} \int_0^t \int_E c D_{xx} F(s, X(s)) X(s)(dx)ds \\
& \hphantom{\lim_{n \rightarrow \infty} (} + \lim_{n \rightarrow \infty} \int_0^t \int_E D_x F^n(s, X(s)) X(s)(dx)ds .
\end{align*}
As a last step, we show that  $D_x F^n$ converges to $D_x F$ uniformly. Lemma 4 in \cite{JT03} states the following. Let es $G: M_F(E) \rightarrow \mathbb{R}$ be continuous with continuous derivative $D_xG$. Then 
\begin{equation*}
G(\mu) = G(0) + \int_0^1 \int_E D_x G(\theta \mu) \mu(dx) d \theta.
\end{equation*} 
This allows us to use the convergence of the second order derivative of $F^n$ to obtain
\begin{align*}
&\sup_{\substack{t \in [0,T], \,  x\in E \\ \mu \in M_K(E)}} |  D_x F(t, \mu) - D_x F^n(t, \mu) | \\
\leq &\sup_{\substack{t \in [0,T], \,  x\in E \\ \mu \in M_K(E)}} | \int_0^1 \int_E D_{xy} F(t, \theta \mu) \mu(dy) d\theta - \int_0^t \int_E D_{xy} F^n(t ,\theta \mu) \mu(dy) d \theta | \\
& \quad + \sup_{\substack{t \in [0,T], \,  x\in E \\ \mu \in M_K(E)}} |  D_x F(t,0) - D_x F^n(t, 0)|\\
= &\sup_{\substack{t \in [0,T], \,  x\in E \\ \mu \in M_K(E)}} | \int_0^1 \int_E D_{xy} F(t, \theta \mu) -  D_{xy} F^n(t ,\theta \mu) \mu(dy) d \theta | \\
& \quad + \sup_{\substack{t \in [0,T], \,  x\in E \\ \mu \in M_K(E)}} |  D_x F(t,0) - D_x F^n(t, 0)|\\
\leq  &\quad \int_0^1 \int_E \sup_{\substack{t \in [0,T], \,  x, \, y \in E \\ \mu \in M_K(E)}} |  D_{xy} F(t, \theta \mu) -  D_{xy} F^n(t ,\theta \mu) | \mu(dy) d \theta  \\
& \quad + \sup_{\substack{t \in [0,T], \,  x\in E \\ \mu \in M_K(E)}} |  D_x F(t,0) - D_x F^n(t, 0)| \\
& \quad \xrightarrow{n \rightarrow \infty} 0 .
\end{align*}

The convergence in the sup-norm yields the convergence in $\| \cdot \|_M$ and thus the convergence of the stochastic integrals with respect to the martingale measure,
\begin{equation*}
\lim_{n \rightarrow \infty} \int_0^t \int_E D_x F^n(s, X(s))M(dx, ds) = \int_0^t \int_E D_x F(s, X(s))M(dx, ds).
\end{equation*} 
\end{proof}

\section{A Functional It\={o}-Formula for the Dawson-Watanabe Superprocess} \label{SecHP}

In Theorem \ref{Thm.Ito}, we consider functions $F$ of $X(t)$, i.e. of the value of $X$ at a specific time $t$. In what follows, we look at functionals $F$ of the path of the process $X$. The following is motivated by the results  in \cite{Co16}. \\

To formalize the notion of (stopped) paths, consider an arbitrary path $\omega \in D([0,T], M_F(E))$, the space of right continuous functions with left limits. We equip $D([0,T], M_F(E))$ with the metric $\tilde{d}$ given by 
\begin{equation*}
\tilde{d}(\omega, \omega') = \sup_{u \in [0,T]} d_P( \omega (u), \omega'(u))
\end{equation*}
for $\omega$, $\omega' \in D([0,T], M_F(E))$, where $d_P$ is the Prokhorov metric on $M_F(E)$. For such a path, define $\omega_t$, the path stopped at time $t$, by $\omega_t(u) = \omega ( t \wedge u)$ for $u \in [0,T]$. Further, define $\omega_{t-}$ by $\omega_{t-}(u) = \omega(u)$ for $u \in [0,t)$ and $\omega_{t-}(u) = \omega(t-)$ for $u \in [t,T]$. Using this, we define an equivalence relation on the space $[0,T] \times D([0,T], M_F(E))$ by 
\begin{equation*}
(t ,  \omega) \sim (t' , \omega') \quad \Leftrightarrow \quad t = t' \ \text{and} \ \omega_t = \omega'_{t'} .
\end{equation*} 
This relation gives rise to the quotient space 
\begin{equation*}
\Lambda_T = \{ (t , \omega_t) \ | \ (t, \omega) \in [0,T] \times D([0,T], M_F(E)) \} \equiv [0,T] \times D([0,T], M_F(E)) / \sim .
\end{equation*}
Next, define a metric $d_\infty$ on $\Lambda_T$ by
\begin{equation*}
d_\infty((t, \omega), (t', \omega')) = \sup_{u \in [0,T]} d_P (\omega(u \wedge t), \omega'(u \wedge t')) + | t - t'|.
\end{equation*}

A functional $F : (\Lambda_T,d_\infty) \rightarrow \mathbb{R}$ is continuous with respect to $d_\infty$ if for all $(t, \omega) \in \Lambda_T$ and every $\varepsilon > 0$ there exists an $\eta > 0$ such that for all $(t', \omega') \in \Lambda_T$ with $d_\infty((t,\omega), (t', \omega')) < \eta$ we have
\begin{equation*}
|F(t, \omega) - F(t' , \omega')| < \varepsilon .
\end{equation*}

Further, a functional $F$ on $[0,T] \times D([0,T], M_F(E))$ is called \emph{non-anticipative} if it is a measurable map on the space of stopped paths, i.e. $F: (\Lambda_T,\mathcal{B}(\Lambda_T) )  \rightarrow (\mathbb{R},\mathcal{B}(\mathbb{R}))$. In other words, $F$ is non-anticipative if $F (t, \omega) = F(t, \omega_t)$ holds for all $\omega \in D([0,T], M_F(E))$. \\

For continuous non-anticipative functionals, we can define two types of derivatives.

\begin{Def}
A continuous non-anticipative functional $F : \Lambda_T \rightarrow \mathbb{R}$ is
\begin{enumerate}[label = (\roman*)]
\item  \emph{horizontally differentiable} at $(t, \omega) \in \Lambda_T$ if the limit
\begin{equation*}
\mathcal{D}^* F (t, \omega) = \lim_{ \varepsilon \rightarrow 0} \frac{F(t + \varepsilon, \omega_t) - F(t, \omega_t)}{\varepsilon}
\end{equation*}
exists. If this is the case for all $(t, \omega) \in \Lambda_T$, we call $\mathcal{D}^* F$ the \emph{horizontal derivative of $F$}.

\item \emph{vertically differentiable} at $(t, \omega) \in \Lambda_T$ in direction $x \in E$ if the limit
\begin{equation*}
\mathcal{D}_x F(t, \omega) = \lim_{\varepsilon \rightarrow 0} \frac{F(t, \omega_t + \varepsilon \delta_x 1_{[t,T]}) - F(t, \omega_t)}{\varepsilon}
\end{equation*}
exists. If this is the case for all $(t, \omega) \in \Lambda_T$, we call $\mathcal{D}_xF$ the \emph{vertical derivative of $F$ in direction $x$}. Higher order vertical derivatives are defined iteratively.
\end{enumerate}
\end{Def}

The definition of the derivatives allows us to define the following set of conditions for a functional $F: \Lambda_T \rightarrow \mathbb{R}$.

\begin{Cond}\label{cond2}
\begin{enumerate}[label = (\roman*)]
\item $F$ is bounded and continuous,
\item the horizontal derivative $\mathcal{D}^*F(t, \omega)$ is continuous and bounded in $t$ and $\omega$,
\item the vertical derivatives $\mathcal{D}_{x_1}F(t, \omega)$, $\mathcal{D}_{x_1 x_2}F(t, \omega)$, $\mathcal{D}_{x_1 x_2 x_3}F(t, \omega)$ and the mixed derivatives\footnote{$\mathcal{D}^*_{x}F(t, \omega) = \mathcal{D}^*(\mathcal{D}_x F(t, \omega))$} $\mathcal{D}^*_{x_1}F(t, \omega)$, $\mathcal{D}^*_{x_1 x_2}F(t, \omega)$, $\mathcal{D}^*_{x_1 x_2 x_3}F(t, \omega)$ are bounded and continuous in $t$, $\omega$, $x_1$, $x_2$ and $x_3$,
\item for fixed $t$, $x_1$, $x_2$, $\omega$ the maps $x \mapsto \mathcal{D}_x F(t, \omega)$ , $x \mapsto \mathcal{D}_{x x_1} F(t, \omega)$ and $x \mapsto \mathcal{D}_{x x_1 x_2} F(t, \omega)$ are in the domain of $A$, 
\item $A^{(x)} \mathcal{D}_{x_1} F(t, \omega)$, $A^{(x)} \mathcal{D}_{x_1 x_2} F(t, \omega)$ and $A^{(x)} \mathcal{D}_{x_1 x_2 x_3} F(t, \omega)$ are continuous in $t$, $x_1$, $x_2$, $x_3$ and $\omega$.
\end{enumerate}
\end{Cond}

For functionals $F$ satisfying these conditions, we can now formulate a functional It\={o}-formula for the $B(A,c)$-superprocess. Note that, as the $B(A,c)$-superprocess has continuous paths, its stopped paths $X_t$ are in $C([0,T], M_F(E))$, the space of continuous mappings from $[0,T]$ to $M_F(E)$.

\begin{Thm} \label{Thm.fct.Ito}
Assume $F$ satisfies Conditions \ref{cond2}, $A$ is a good generator and $X$ is a $B(A,c)$-superprocess. Then for $t \in [0,T]$
\begin{align*}
F(t, X_t) - F(0 , X(0)) \ = \quad &\hphantom{\frac{1}{2}}   \int_0^t \mathcal{D}^* F( s, X_s) ds \\
+ \ & \hphantom{\frac{1}{2}} \int_0^t \int_E A^{(x)} \mathcal{D}_x F(s, X_s) X(s)(dx) ds \\
+ \ & \frac{1}{2} \int_0^t \int_E c \mathcal{D}_{xx} F(s, X_s) X(s)(dx) ds \\
+ \ &\hphantom{\frac{1}{2}}  \int_0^t \int_E  \mathcal{D}_x F (s, X_s) M(dx, ds)
\end{align*}
holds.
\end{Thm}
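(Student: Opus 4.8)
The strategy is to reduce the functional It\={o}-formula to the non-functional one (Theorem \ref{Thm.Ito}) by a discretization-in-time argument, following the scheme of Cont--Fourni\'e adapted to the measure-valued setting. The key observation is that for a \emph{piecewise constant} approximation of the stopped path, the functional $F$ evaluated along it becomes, on each subinterval, an ordinary function of the current state $X(s)$, so that Theorem \ref{Thm.Ito} applies directly; one then passes to the limit as the mesh of the partition tends to zero, using the continuity and boundedness hypotheses in Conditions \ref{cond2} together with the almost sure continuity of the sample paths of the $B(A,c)$-superprocess.

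In detail, I would proceed as follows. First, fix $t\in[0,T]$ and a sequence of partitions $0=t_0^n<t_1^n<\dots<t_n^n=t$ with mesh going to zero. For each $n$, define the step-path approximation $X^n_s$ of the stopped path: on $[t_i^n,t_{i+1}^n)$, freeze the path after $t_i^n$, i.e. replace $X_s$ by $X_{t_i^n}+ (X(s)-X(t_i^n))\,\delta\text{-increment}$; more precisely use the ``flat extension'' $\omega\mapsto \omega_{t_i^n}+(X(s)(\cdot)-X(t_i^n)(\cdot))1_{[t_i^n,T]}$ so that the vertical perturbation picks up exactly the increment of $X$ since $t_i^n$. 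Along such a frozen path, $s\mapsto F(s,X^n_s)$ on $[t_i^n,t_{i+1}^n)$ is of the form $(s,\mu)\mapsto F(s,\, X^n_{t_i^n}\text{-extended by }(\mu-X(t_i^n)))$, a bona fide function of $(s,X(s))$ to which Theorem \ref{Thm.Ito} applies; its horizontal derivative is $\mathcal{D}^*F$, its directional derivatives $D_x$ are the vertical derivatives $\mathcal{D}_xF$, and $A^{(x)}D_x$ matches $A^{(x)}\mathcal{D}_xF$, by the very definitions of the pathwise derivatives. Summing the resulting It\={o} expansions over $i=0,\dots,n-1$ and telescoping the boundary terms $F(t_{i+1}^n,X^n_{t_{i+1}^n-}) - F(t_i^n,X^n_{t_i^n})$, one obtains an identity of the form
\begin{align*}
F(t,X^n_t) - F(0,X(0)) = \ & \int_0^t \mathcal{D}^*F(s,X^n_s)\,ds + \int_0^t\!\!\int_E A^{(x)}\mathcal{D}_xF(s,X^n_s)\,X(s)(dx)\,ds \\
& + \tfrac12\int_0^t\!\!\int_E c\,\mathcal{D}_{xx}F(s,X^n_s)\,X(s)(dx)\,ds + \int_0^t\!\!\int_E \mathcal{D}_xF(s,X^n_s)\,M(dx,ds) + R^n,
\end{align*}
where $R^n$ collects the jump-type correction terms arising because the step path is discontinuous at the $t_i^n$ while the horizontal derivative only sees the continuous part. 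One must check $R^n\to 0$; this is where the continuity of $\mathcal{D}^*F$ and the boundedness of the vertical derivatives, combined with a Taylor expansion in the vertical direction of size $\|X(t_{i+1}^n)-X(t_i^n)\|$ (which is uniformly small by sample-path continuity), are used.

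Finally I would pass to the limit $n\to\infty$. By definition of the metric $d_\infty$ and the uniform continuity of $s\mapsto X(s)$ on $[0,T]$ (a.s.), the stopped step-paths $X^n_s$ converge to $X_s$ in $(\Lambda_T,d_\infty)$, uniformly in $s$; hence by Condition \ref{cond2}(i) the left side converges to $F(t,X_t)$. For the three Lebesgue integrals, continuity and boundedness of $\mathcal{D}^*F$, $A^{(x)}\mathcal{D}_xF$, $\mathcal{D}_{xx}F$ (Conditions \ref{cond2}(ii), (iii), (v)) give pointwise convergence of the integrands together with a uniform bound, so dominated convergence applies (the $X(s)(dx)\,ds$ integration is against a finite measure on $[0,T]\times E$ since $\sup_{s\le T}\langle X(s),1\rangle<\infty$ a.s.). For the stochastic integral term, $\mathcal{D}_xF(s,X^n_s)\to \mathcal{D}_xF(s,X_s)$ boundedly and pointwise, so by \eqref{eq.finit} and dominated convergence $\|\mathcal{D}_xF(\cdot,X^n_\cdot)-\mathcal{D}_xF(\cdot,X_\cdot)\|_M\to 0$, whence the stochastic integrals converge in $L^2$; as in the proof of Theorem \ref{Thm.Ito}, it is here that the extension of the integral to optional integrands is needed, since $s\mapsto \mathcal{D}_xF(s,X^n_s)$ is only r.c.l.l.\ and adapted, not predictable. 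Passing to the limit on both sides yields the claimed formula.

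\textbf{Main obstacle.} The crux is controlling the remainder $R^n$: showing that the error made by replacing the true path by a frozen step path on each subinterval — in particular the mismatch between the horizontal increment of $F$ along the step path and the contribution $\int \mathcal{D}^*F\,ds$, together with the second-order vertical terms — vanishes in the limit. This requires a careful pathwise Taylor expansion of $F$ in the vertical direction with uniform bounds on $\mathcal{D}_xF$, $\mathcal{D}_{xx}F$, $\mathcal{D}_{xxx}F$ and their mixed horizontal derivatives (hence the third-order hypotheses in Conditions \ref{cond2}), and uses the a.s.\ uniform continuity of the superprocess paths to make the increments $d_P(X(t_{i+1}^n),X(t_i^n))$ uniformly small. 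A secondary technical point is verifying that the non-anticipative functional $s\mapsto F(s,X^n_s)$, restricted to a freezing interval, genuinely falls under the hypotheses of Theorem \ref{Thm.Ito} as a function on $[0,T]\times M_F(E)$ — i.e.\ that vertical derivatives translate correctly into directional derivatives $D_x$ — which is where the precise form of the vertical perturbation $\varepsilon\delta_x 1_{[t,T]}$ in the definition of $\mathcal{D}_xF$ is essential.
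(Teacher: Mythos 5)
Your plan is essentially the paper's own argument: a dyadic time mesh, a piecewise-constant (frozen) approximation of the stopped path, an exact split of each increment into a horizontal part (handled by the fundamental theorem of calculus for $\mathcal{D}^*F$) and a vertical part to which Theorem \ref{Thm.Ito} is applied via $\phi(\mu)=F(\tau_i^n,\,App^n(X_t)_{\tau_i^n-}+(\mu-X(\tau_i^n))1_{[\tau_i^n,T]})$, followed by telescoping and a passage to the limit using Conditions \ref{cond2}, path continuity, dominated convergence, and the optional-integrand extension of the martingale-measure integral. The one point where you diverge is cosmetic but worth noting: in the paper the telescoped identity is exact with integrands evaluated at the frozen-plus-live-increment paths, so there is no remainder $R^n$ and no vertical Taylor expansion to control --- the only work in the limit is the convergence of the integrands, and the third-order derivatives in Conditions \ref{cond2} are needed merely so that $\phi$ satisfies Conditions \ref{cond1}.
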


\begin{proof}
Define the dyadic   mesh $\{ \tau_k^n, \ k = 1 , \ldots , k(n) \}$ on $[0,t]$ by
\begin{equation*}
\tau_0^n = 0, \quad \tau_k^n = \inf \{ s > \tau_{k-1}^n \ | \ 2^n s \in \mathbb{N} \} \wedge t.
\end{equation*}
Further, define 
\begin{equation*}
App^n(X_t) = \sum_{i = 0}^\infty X(\tau_{i+1}^n ) 1_{[\tau_i^n , \tau_{i+1}^n)} + X(t) 1_{[t , T]}.
\end{equation*}
While $X$ itself has continuous paths, this is a cadlag, piecewise constant approximation of $X_t$. Now set $h_i^n = \tau_{i+1}^n - \tau_i^n$. Then 
\begin{align}
& F(\tau_{i+1}^n, App^n(X_t)_{\tau_{i+1}^n -}) - F(\tau_i^n, App^n(X_t)_{\tau_i^n -})  \notag \\
= \quad & F (\tau_{i+1}^n , App^n(X_t)_{\tau_{i+1}^n -}) - F(\tau_i^n, App^n(X_t)_{\tau_i^n}) \label{sum}\\
+ & \ F(\tau_i^n, App^n(X_t)_{\tau_i^n}) - F (\tau_i^n, App^n(X_t)_{\tau_i^n -}) \notag
\end{align}

The first part of the right hand side in \eqref{sum} is equal to $\psi(h_i^n) - \psi(0)$ where $\psi(u) = F(\tau_i^n + u, App^n(X_t)_{\tau_i^n })$ as
\begin{equation*}
\psi(h_i^n) - \psi(0) = F(\tau_i^n + h_i^n, App^n(X_t)_{\tau_i^n}) - F(\tau_i^n,  App^n(X_t)_{\tau_i^n}) ,
\end{equation*}
and for all $u \in [0,T]$
\begin{align*}
App^n(X_t)_{\tau_{i+1}^n-}(u) &= \begin{cases} 
App^n(X_t)(u) \quad &u \in [0, \tau_{i+1}^n) \\
App^n(X_t)(\tau_{i+1}^n-) \quad &u \in [\tau_{i+1}^n, T]
\end{cases} \\
&= \begin{cases} 
App^n(X_t)(u) \quad &u \in [0, \tau_{i+1}^n) \\
App^n(X_t)(\tau_i^n) \quad &u \in [\tau_{i+1}^n, T]
\end{cases} \\
&= \begin{cases} 
App^n(X_t)(u) \quad &u \in [0, \tau_{i}^n) \\
App^n(X_t)(\tau_i^n) \quad &u \in [\tau_{i}^n, T]
\end{cases}  \\
&= App^n(X_t)_{\tau_i^n} (u) .
\end{align*}
Therefore, we have
\begin{align*}
&F(\tau_{i+1}^n, App^n(X_t)_{\tau_i^n}) - F (\tau_i^n,  App^n(X_t)_{\tau_i^n}) \\
= & \int_0^{\tau_{i+1}^n - \tau_i^n}  \mathcal{D}^*  F ({\tau_i^n + s}, App^n(X_t)_{\tau_i^n})  ds \\
= & \int_{\tau_i^n}^{\tau_{i+1}^n} \mathcal{D}^* F(s, App^n(X_t)_{\tau_i^n}) ds
\end{align*}
as $\psi(h_i^n) - \psi(0) = \int_0^{h_i^n} \psi'(u) du$ and
\begin{align*}
\psi'(u) &= \lim_{\varepsilon \rightarrow 0} \frac{\psi(u + \varepsilon) - \psi(u)}{\varepsilon} \\
&= \lim_{\varepsilon \rightarrow 0 } \frac{F (\tau_i^n + u + \varepsilon , App^n(X_t)_{\tau_i^n}) - F (\tau_i^n + u, App^n(X_t)_{\tau_i^n})}{\varepsilon} \\
&= \mathcal{D}^* F(\tau_i^n + u, App^n(X_t)_{\tau_i^n}) .
\end{align*}

The second term of the right hand side in \eqref{sum} is equal to $\phi(X(\tau_{i+1}^n) ) - \phi(X(\tau_i^n))$ where $\phi(\mu) = \tilde{\phi}(\mu - X(\tau_i^n))$, $\tilde{\phi}(\mu) = F(\tau_i^n, App^n(X_t)_{\tau_i^n -} + \mu 1_{[\tau_i^n , T]} )$ as
\begin{align*}
&\phi(X(\tau_{i+1}^n)) - \phi(X(\tau_i^n)) \\
= \ &F (\tau_i^n, App^n(X_t)_{\tau_i^n -} +  (X(\tau_{i+1}^n)- X(\tau_i^n) )1_{[\tau_i^n , T]}  ) - F (\tau_i^n, App^n(X_t)_{\tau_i^n -})
\end{align*}
and
\begin{align*}
&  App^n(X_t)_{\tau_i^n -} +  (X(\tau_{i+1}^n)- X(\tau_i^n) )1_{[\tau_i^n , T]} (u) \\
 = \ & \begin{cases}
App^n(X_t)(u)  \quad &\text{$u \in [0, \tau_i^n)$} \\
App^n(X_t)(\tau_i^n-) + X(\tau_{i+1}^n ) - X(\tau_i^n) \quad & \text{$u \in [\tau_i^n , T]$}
 \end{cases} \\
  = \ & \begin{cases}
App^n(X_t)(u)  \quad &\text{$u \in [0, \tau_i^n)$} \\
X(\tau_{i+1}^n ) \quad & \text{$u \in [\tau_i^n , T]$}
 \end{cases} \\
= \ & App^n(X_t)_{\tau_i^n} .
\end{align*}
As 
\begin{align*}
D_x \phi(\mu) &= \lim_{\varepsilon \rightarrow 0} \frac{\phi(\mu + \varepsilon \delta_x) -\phi(\mu)}{\varepsilon} \\
&= \lim_{\varepsilon \rightarrow 0} \frac{1}{\varepsilon} \left(F(\tau_i^n , App^n(X_t)_{\tau_i^n- } + (\mu - X(\tau_i^n) ) 1_{[\tau_i^n , T]} + \varepsilon \delta_x 1_{[\tau_i^n, T]}) \right. \\
& \hphantom{\lim_{\varepsilon \rightarrow 0} \frac{1}{\varepsilon} (} \quad - \left. F(\tau_i^n, App^n(X_t)_{\tau_i^n-} + (\mu - X(\tau_i^n)) 1_{[\tau_i^n , T]})\right) \\
&= \mathcal{D}_x F (\tau_i^n, App^n(X_t)_{\tau_i^n -} + (\mu  - X(\tau_i^n)) 1_{[\tau_i^n, T]} )
\end{align*}
and analogously 
\begin{align*}
D_{x_1x_2}\phi(\mu) &= \mathcal{D}_{x_1x_2} F (\tau_i^n, App^n(X_t)_{\tau_i^n -} + (\mu  - X(\tau_i^n)) 1_{[\tau_i^n, T]} ) \\
D_{x_1 x_2 x_3} \phi(\mu) & = \mathcal{D}_{x_1 x_2 x_3} F (\tau_i^n, App^n(X_t)_{\tau_i^n -} + (\mu  - X(\tau_i^n)) 1_{[\tau_i^n, T]} ),
\end{align*} 
we can apply the non-funcitonal It\={o}-formula from Theorem \ref{Thm.Ito} (see \ref{AppendA}) and obtain
\begin{align*}
\phi(X(\tau_{i+1}^n)) - \phi(X(\tau_i^n)) = \quad &\hphantom{\frac{1}{2}}\int_{\tau_i^n}^{\tau_{i+1}^n} \int_E A^{(x)} \mathcal{D}_x \phi(X(s)) X(s)(dx)ds \\
+ \ & \frac{1}{2} \int_{\tau_i^n}^{\tau_{i+1}^n} \int_E c \mathcal{D}_{xx} \phi(X(s)) X(s) (dx) ds\\
+ \ & \hphantom{\frac{1}{2}} \int_{\tau_i^n}^{\tau_{i+1}^n} \int_E \mathcal{D}_x \phi(X(s)) M(ds, dx) .
\end{align*}
Plugging in the definition of $\phi$, we end up with
\begin{align*}
&\hphantom{\frac{1}{2}} F (\tau_{i}^n, App^n(X_t)_{\tau_{i}^n}) - F (\tau_i^n,  App^n(X_t)_{\tau_i^n-})\\
 = \quad &\hphantom{\frac{1}{2}}\int_{\tau_i^n}^{\tau_{i+1}^n} \int_E A^{(x)} \mathcal{D}_x F (\tau_i^n, App^n(X_t)_{\tau_i^n -} + (X(s)  - X(\tau_i^n)) 1_{[\tau_i^n, T]} ) X(s)(dx)ds \\
+ \ & \frac{1}{2} \int_{\tau_i^n}^{\tau_{i+1}^n} \int_E c \mathcal{D}_{xx} F (\tau_i^n, App^n(X_t)_{\tau_i^n -} + (X(s)  - X(\tau_i^n)) 1_{[\tau_i^n, T]} ) X(s) (dx) ds\\
+ \ & \hphantom{\frac{1}{2}} \int_{\tau_i^n}^{\tau_{i+1}^n} \int_E \mathcal{D}_x F (\tau_i^n, App^n(X_t)_{\tau_i^n -} + (X(s)  - X(\tau_i^n)) 1_{[\tau_i^n, T]} ) M(ds, dx) .
\end{align*}

Combining this with the result for the first part of the sum in \eqref{sum} yields the following expression for the left hand side in \eqref{sum}:
\begin{align*}
&\hphantom{\frac{1}{2}} F (\tau_{i+1}^n, App^n(X_t)_{\tau_{i+1}^n-}) - F (\tau_i^n, App^n(X_t)_{\tau_i^n-})\\
 = \quad & \hphantom{\frac{1}{2}} \int_{\tau_i^n}^{\tau_{i+1}^n} \mathcal{D}^* F(s, App^n(X_t)_{\tau_i^n}) ds \\
 + \ & \hphantom{\frac{1}{2}}\int_{\tau_i^n}^{\tau_{i+1}^n} \int_E A^{(x)} \mathcal{D}_x F (\tau_i^n, App^n(X_t)_{\tau_i^n -} + (X(s)  - X(\tau_i^n)) 1_{[\tau_i^n, T]} ) X(s)(dx)ds \\
+ \ & \frac{1}{2} \int_{\tau_i^n}^{\tau_{i+1}^n} \int_E c \mathcal{D}_{xx} F (\tau_i^n, App^n(X_t)_{\tau_i^n -} + (X(s)  - X(\tau_i^n)) 1_{[\tau_i^n, T]} ) X(s) (dx) ds\\
+ \ & \hphantom{\frac{1}{2}}  \int_{\tau_i^n}^{\tau_{i+1}^n} \int_E \mathcal{D}_x F (\tau_i^n, App^n(X_t)_{\tau_i^n -} + (X(s)  - X(\tau_i^n)) 1_{[\tau_i^n, T]} ) M(ds, dx) .
\end{align*}

Defining $i_n(s)$ as the index such that $s \in [\tau_{i_n(s)}^n, \tau_{i_n(s) + 1}^n )$ and summation over $i$ yield:
\begin{align*}
&\hphantom{\frac{1}{2}}  F(t , App^n(X_t)_{t-}) - F (0, X_0)\\
 = \quad & \hphantom{\frac{1}{2}} \int_0^t \mathcal{D}^* F(s, App^n(X_t)_{\tau_{i_n(s)}^n}) ds \\
 + \ & \hphantom{\frac{1}{2}} \int_0^t \int_E A^{(x)} \mathcal{D}_x F (\tau_{i_n(s)}^n, App^n(X_t)_{\tau_{i_n(s)}^n -} + (X(s)  - X(\tau_{i_n(s)}^n)) 1_{[\tau_{i_n(s)}^n, T]} ) X(s)(dx) ds\\
+ \ & \frac{1}{2} \int_0^t \int_E c \mathcal{D}_{xx} F (\tau_{i_n(s)}^n, App^n(X_t)_{\tau_{i_n(s)}^n -} + (X(s)  - X(\tau_{i_n(s)}^n)) 1_{[\tau_{i_n(s)}^n, T]} ) X(s) (dx) ds\\
+ \ & \hphantom{\frac{1}{2}}  \int_0^t \int_E \mathcal{D}_xF (\tau_{i_n(s)}^n, App^n(X_t)_{\tau_{i_n(s)}^n -} + (X(s)  - X(\tau_{i_n(s)}^n)) 1_{[\tau_{i_n(s)}^n, T]} ) M(ds, dx) .
\end{align*}

Let's examine the limits of the different terms individually. First, we get
\begin{align*}
&d_\infty( (s, X_s), (\tau_{i_n(s)}^n, App^n(X_t)_{\tau_{i_n(s)}^n})) \\
 = \ & | s - \tau_{i_n(s)}^n| + \sup_{u \in [0,T]} d_P(X(s \wedge u), App^n(X_t)_{\tau_{i_n(s)}^n}(u)) \\
 \leq \ & \frac{1}{2^n} +  \sup_{0 \leq i \leq k(n)} \sup_{u \in [\tau_i^n , \tau_{i+1}^n)} d_P(X(u \wedge s), X(\tau_{i_n(s)}^n \wedge \tau_{i+1}^n)) ,
\end{align*}
which goes to zero due to the continuity of the paths of $X$. Analogously, we obtain that
$d_\infty((t, App^n(X_t)), (t, X_t))$ goes to zero. In particular, this yields $App^n(X_t)_{t-} \rightarrow X_{t-}$ and by combining this with the continuity of $F$ we end up with
\begin{equation*}
\lim_{n \rightarrow \infty} F (t, App^n(X_t)_{t-})) = F(t, X_{t-}) .
\end{equation*}
Taking advantage of the continuity of the paths of $X$ once again yields that $F(t, X_{t-}) = F(t, X_t)$. \\

As
\begin{equation*}
d_\infty ((s, X_s), (s, App^n(X_t)_{\tau_{i_n(s)}^n} )) \rightarrow 0 ,
\end{equation*}
the continuity assumptions on $\mathcal{D}^* F$ yield
\begin{equation*}
\lim_{n \rightarrow \infty}\mathcal{D}^* F(s, App^n(X_t)_{\tau_{i_n(s)}^n} )) = \mathcal{D}^*F(s, X_s) .
\end{equation*}
The boundedness assumptions on $\mathcal{D}^* F$ further allow us to apply the dominated convergence theorem to end up with
\begin{equation*}
\lim_{n \rightarrow \infty} \int_0^t \mathcal{D}^* F(s, App^n(X_t)_{\tau_{i_n(s)}^n}) ds = \int_0^t \mathcal{D}^* F(s, X_s)ds .
\end{equation*}

Now consider
\begin{align*}
&d_\infty((s, X_s)(\tau_{i_n(s)}^n , App^n(X_t)_{\tau_{i_n(s)}^n -} + (X(s) - X(\tau_{i_n(s)}^n ))1_{[\tau_{i_n(s)}^n, T]} )) \\
 = \ & | s - \tau_{i_n(s)}^n  | + \sup_{u \in [0,T]} d_P (X_s(u) , App^n(X_t)_{\tau_{i_n(s)}^n -}(u) + (X(s) - X(\tau_{i_n(s)}^n ))1_{[\tau_{i_n(s)}^n, T]}(u)).
\end{align*}
As $| s - \tau_{i_n(s)}^n | \rightarrow 0$, we only have to be concerned with
\begin{align*}
& \hphantom{+} \sup_{u \in [0,T]} d_P (X_s(u) , App^n(X_t)_{\tau_{i_n(s)}^n - }(u) + (X(s) - X(\tau_{i_n(s)}^n ))1_{[\tau_{i_n(s)}^n, T]}(u)) \\
\leq  \ & \hphantom{+} \sup_{u \in [0,T]} d_P (X_s(u),  App^n(X_t)_{\tau_{i_n(s)}^n -} (u) ) \\
& + \sup_{u \in [0,T]}  d_P (App^n(X_t)_{\tau_{i_n(s)}^n-} (u), App^n(X_t)_{\tau_{i_n(s)}^n - }(u) + (X(s) - X(\tau_{i_n(s)}^n ))1_{[\tau_{i_n(s)}^n, T]}(u) ) .
\end{align*}
The first term on the right hand side goes to zero, which leaves us with the second term. However, as
\begin{align*}
&\sup_{u \in [0,T]} d_P ( App^n(X_t)_{\tau_{i_n(s)}^n-} (u) , App^n(X_t)_{\tau_{i_n(s)}^n - }(u) + (X(s) - X(\tau_{i_n(s)}^n ))1_{[\tau_{i_n(s)}^n, T]}(u)) \\
= & \ d_P(X(\tau_{i_n(s)}^n),X(\tau_{i_n(s)}^n) + X(s) - X(\tau_{i_n(s)}^n) ) \\
= & \ d_P(X(\tau_{i_n(s)}^n),X(s)) ,
\end{align*}
the second term on the right hand side also goes to zero and thus the continuity assumptions yield
\begin{equation*}
\lim_{n \rightarrow \infty} A^{(x)} \mathcal{D}_x F (\tau_{i_n(s)}^n, App^n(X_t)_{\tau_{i_n(s)}^n - } + (X(s) - X(\tau_{i_n(s)}^n ))1_{[\tau_{i_n(s)}^n, T]}) = A^{(x)} \mathcal{D}_x F_s (X_s) .
\end{equation*}

Like in the proof of Theorem \ref{Thm.Ito}, it is enough to consider $X \in M_K(E)$. Then the boundedness assumptions on $A^{(x)}\mathcal{D}_x F_s(\omega)$ allow us to apply the dominated convergence theorem (see \ref{AppendB}). Therefore, we end up with
\begin{align*}
& \lim_{n \rightarrow \infty} \int_0^t \int_E A^{(x)} \mathcal{D}_x F (\tau_{i_n(s)}^n, App^n(X_t)_{\tau_{i_n(s)}^n - } + (X(s) - X(\tau_{i_n(s)}^n ))1_{[\tau_{i_n(s)}^n, T]}) X(s)(dx)ds \\
= \ & \int_0^t \int_E A^{x} \mathcal{D}_x F(s, X_s) X(s)(dx) ds .
\end{align*}
Analogously we obtain
\begin{align*}
& \lim_{n \rightarrow \infty} \int_0^t \int_E \mathcal{D}_{xx} F(\tau_{i_n(s)}^n,App^n(X_t)_{\tau_{i_n(s)}^n - } + (X(s) - X(\tau_{i_n(s)}^n ))1_{[\tau_{i_n(s)}^n, T]}) X(s) (dx) ds \\
= \ & \int_0^t \int_E \mathcal{D}_{xx} F (s, X_s) X(s)(dx)ds .
\end{align*}

For the convergence of the integral with respect to $M(ds, dx)$, note that 
\begin{equation*}
\mathcal{D}_x F (\tau_{i_n(s)}^n , App^n(X_t)_{\tau_{i_n(s)}^n - } + (X(s) - X(\tau_{i_n(s)}^n ))1_{[\tau_{i_n(s)}^n, T]})
\end{equation*}
is bounded and as the function is optional, it is in $\mathcal{P}_M$. The same applies to $\mathcal{D}_x F (s, X_s)$ and thus, the integrals are well defined. \\

From the continuity assumptions on $\mathcal{D}_x F(t, \omega)$ and the fact that
\begin{equation*}
d_\infty((s, X_s)(\tau_{i_n(s)}^n , App^n(X_t)_{\tau_{i_n(s)}^n -}) + (X(s) - X(\tau_{i_n(s)}^n ))1_{[\tau_{i_n(s)}^n, T]} )) \rightarrow 0 ,
\end{equation*}
we get that
\begin{equation}
\sup_{x \in E, \, s \in [0,T]} | \mathcal{D}_x F(\tau_{i_n(s)}^n,  \omega_n) - \mathcal{D}_x F(s,\omega) | \rightarrow 0 
\label{supconv}
\end{equation}
if $\omega_n \rightarrow \omega$. This has to hold as otherwise, we can find an $\varepsilon >0$ and sequences $(s_n) \subset [0,T]$ and $(x_n) \subset E$ such that
\begin{equation}
| \mathcal{D}_{x_n} F(\tau_{i_n(s)}^n, \omega_n) - \mathcal{D}_{x_n} F(s_n, \omega) | \geq 0 \quad \forall n \in \mathbb{N}.
\label{contra}
\end{equation}
However, there exist convergent subsequences $(\tilde{s}_n)$ and $(\tilde{x}_n)$ of $(s_n)$ and $(x_n)$ with $\tilde{s}_n \rightarrow \tilde{s}$ and $\tilde{x}_n \rightarrow \tilde{x}$ for some $\tilde{s}$, $\tilde{x}$ and the continuity assumptions as well as the convergence of $\omega_n$ yield
\begin{equation*}
| \mathcal{D}_{\tilde{x}_n} F(\tau_{i_n(\tilde{s}_n)}^n, \omega_n) - \mathcal{D}_{\tilde{x}} F(\tilde{s}, \omega)| \rightarrow 0
\end{equation*}
and
\begin{equation*}
| \mathcal{D}_{\tilde{x}_n} F(\tilde{s}_n, \omega) - \mathcal{D}_{\tilde{x}} F(\tilde{s}, \omega)| \rightarrow 0,
\end{equation*}
which contradicts \eqref{contra}. Therefore \eqref{supconv} has to hold. \\

From equation \eqref{supconv} we get that
\begin{align*}
\mathbb{E} \left[ c \int_0^t \int_E \right. |  \mathcal{D}_x F (\tau_{i_n(s)}^n , App^n(X_t)_{\tau_{i_n(s)}^n - } + (X(s) - X(\tau_{i_n(s)}^n ))1_{[\tau_{i_n(s)}^n, T]}) \quad \hphantom{\rightarrow \infty}  \\
  - \mathcal{D}_sF(s, X_s) |^2 X(s)(dx)ds \left. \vphantom{\int_0^T} \right] \rightarrow \infty .
\end{align*}
By the definition of the integral with respect to a martingale measure in \cite{Wal86}, we thus obtain
\begin{align*}
& \lim_{n \rightarrow \infty} \int_0^t \int_E \mathcal{D}_x F (\tau_{i_n(s)}^n , App^n(X_t)_{\tau_{i_n(s)}^n - } + (X(s) - X(\tau_{i_n(s)}^n ))1_{[\tau_{i_n(s)}^n, T]}) M(ds, dx) \\
= \ & \int_0^t \int_E \mathcal{D}_x F(s, X_s) M(ds, dx) ,
\end{align*}
which completes the proof.
\end{proof}

\section{Applications and further Notes} \label{SecApp}

We start this section by comparing our definition of the pathwise directional
derivatives to the derivatives obtained by following the approach in \cite{Du09}. This is followed by the derivation of martingale representation formulas from the two It\={o}-formulas in the previous sections (cf. \cite{EP95}).

\subsection{Comparison to Dupire's Vector Bundle Approach}

The approach in \cite{Du09} transfers to our setting as follows. Let $\tilde{\Lambda}_t = D([0, t] , M_F(E))$. The vector bundle underlying Dupire's idea is $\tilde{\Lambda} = \bigcup_{t \in [0, T]} \tilde{\Lambda}_t$. Further let $\tilde{\omega}_t = \{\omega(s)  :  0 \leq s \leq t\}$. Then $\tilde{\omega}_t \in \tilde{\Lambda}_t \subset \tilde{\Lambda}$. In \cite{Du09}, functional derivatives are defined as follows. Define, for some $\varepsilon \in \mathbb{R}$ and $x \in E$,
\begin{equation*}
\tilde{\omega}_t^{\varepsilon, x} = \tilde{\omega}_t(s) 1_{\{s \leq t \}} + \varepsilon \delta_x 1_{\{t = s \}}
\end{equation*}
and, for some functional $f : \tilde{\Lambda} \rightarrow \mathbb{R}$,
\begin{equation*}
\Delta_x f(\tilde{\omega}_t) = \lim_{\varepsilon \rightarrow 0} \frac{ f(\tilde{\omega}^{\varepsilon,x}_t) - f(\tilde{\omega}_t)}{\varepsilon} = \lim_{\varepsilon \rightarrow 0} \frac{f(\tilde{\omega}_t + \varepsilon \delta_x 1_{\{t \}} ) - f(\tilde{\omega}_t)}{\varepsilon} .
\end{equation*}

Consider the mapping $\varphi : \Lambda \rightarrow D([0, T], M_F(E))$ given by 
\begin{equation*}
\varphi (\tilde{\omega}_t) = \varphi (\{\omega(s) \ : \ 0 \leq s \leq t\} ) = \{ \omega(s \wedge t ) \ : \ 0 \leq s \leq T \}) = \omega_t .
\end{equation*}
Essentially, $\varphi$ extends the path of an element in $\Lambda_t$ up to time $T$ by keeping its final value $\omega(t)$ constant on $(t,T]$. Its inverse cuts the path of $\omega_t$ at time $t$. If $\omega$ was real-valued, the mapping could be illustrated as follows: 
\begin{align*}
\varphi( \includegraphics[width = 3 cm, height = 1 cm, valign = c]{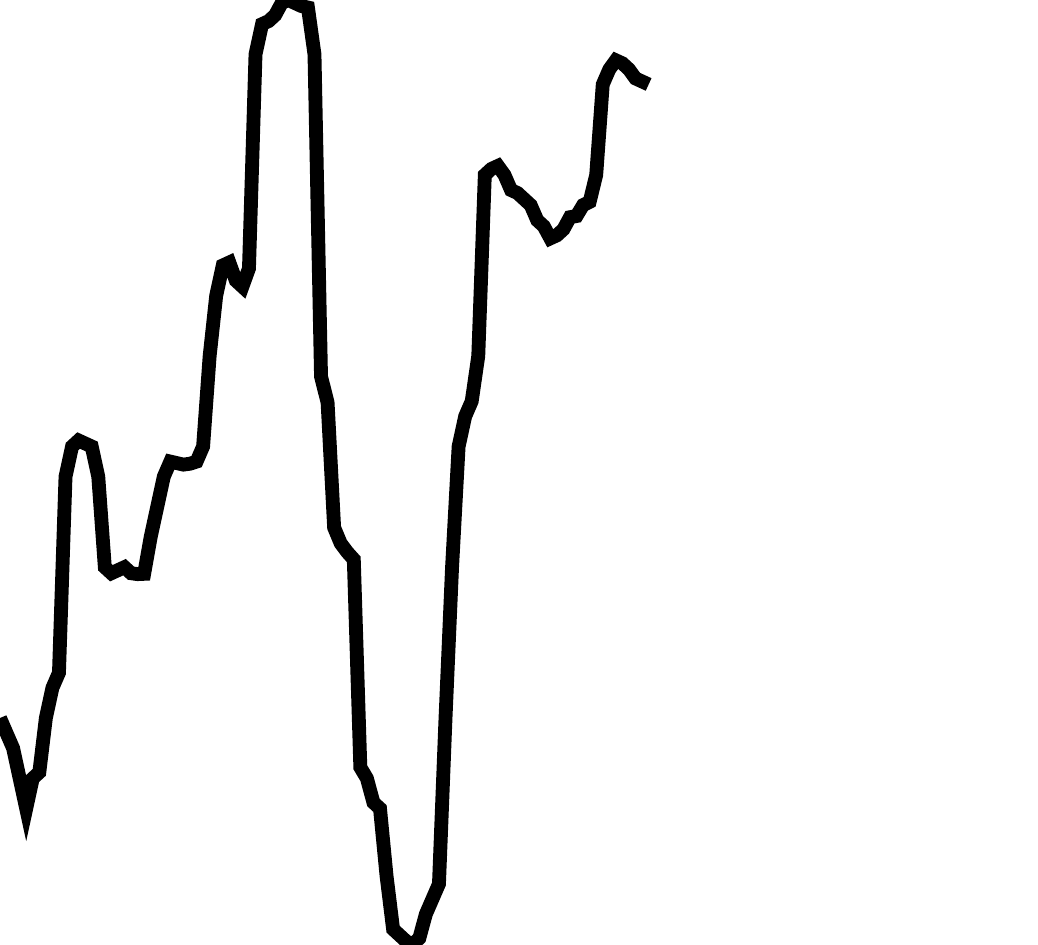} ) &= \includegraphics[width = 3 cm, height = 1 cm, valign = c]{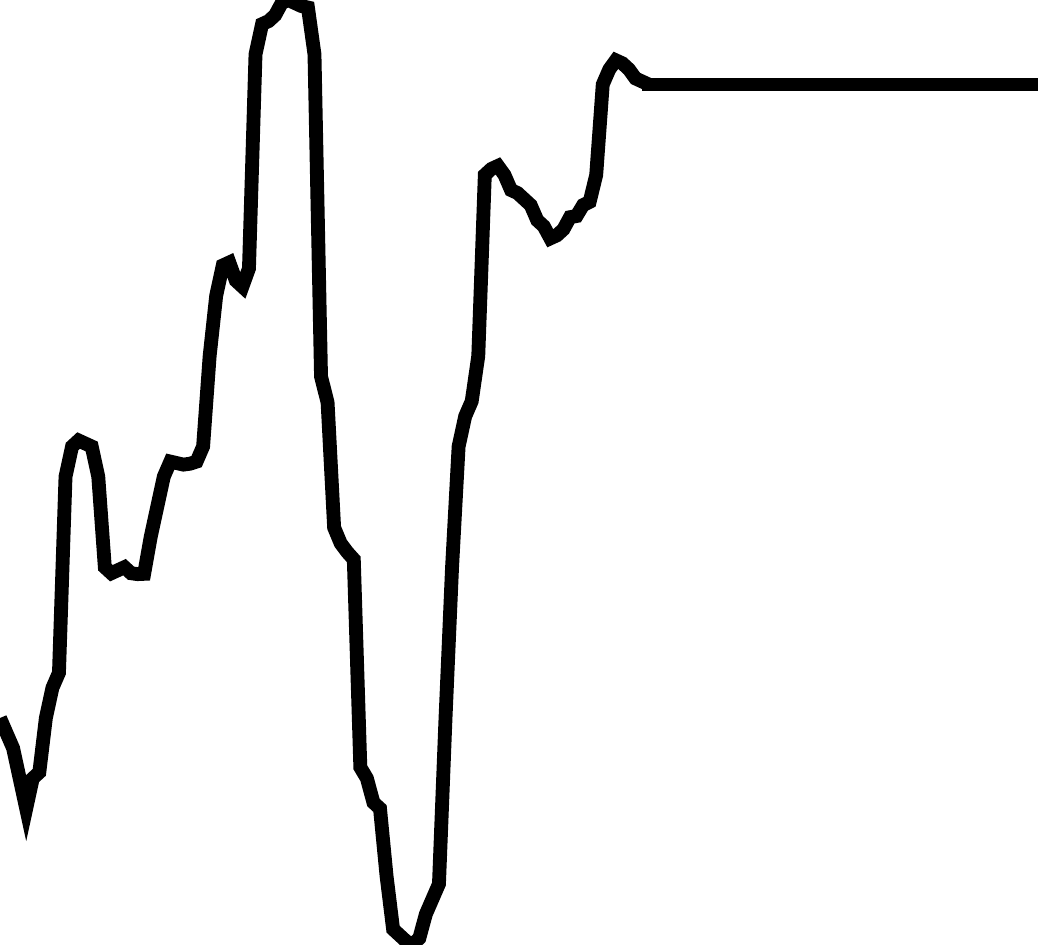}  \\
\varphi^{-1}( \includegraphics[width = 3 cm, height = 1 cm, valign = c]{Plot1.pdf} ) &= \includegraphics[width = 3 cm, height = 1 cm, valign = c]{Plot2.pdf}  
\end{align*}

Now, define $f = F \circ \varphi$ to connect our setting to the setting in \cite{Du09}. The function $f : \tilde{\Lambda} \rightarrow \mathbb{R}$ maps elements of the vector bundle onto the real line. As 
\begin{equation*}
\varphi ( \tilde{\omega}_t + \varepsilon \delta_x 1_{\{t\}}) = X_t + \varepsilon \delta_x 1_{[t,T]}
\end{equation*}
holds, we also get that the respective derivatives coincide:
\begin{align*}
\Delta_x f(\tilde{\omega}_t)  &= \lim_{\varepsilon \rightarrow 0} \frac{f(\tilde{\omega}_t + \varepsilon \delta_x 1_{\{s = t\}}) - f(\tilde{\omega}_t)}{\varepsilon} \\
&= \lim_{\varepsilon \rightarrow 0} \frac{F(\omega_t + \varepsilon \delta_x 1_{[ t, T]}) - F(\omega_t)}{\varepsilon} \\
&= \mathcal{D}_x F(\omega_t) .
\end{align*}
Higher order derivatives follow accordingly.

\subsection{Martingale Representation}

Theorem \ref{Thm.fct.Ito} allows us to derive the following martingale representation formula.

\begin{Thm}\label{ThmMR}
Let $F$ satisfy Conditions \ref{cond2}, $X$ solve \eqref{MP} and let $(F(t, X_t))_{t \in [0,T]}$ be a martingale. Then 
\begin{equation*}
F(t, X_t) = F(0, X_0) + \int_0^t \int_E \mathcal{D}_x F(s, X_s) M(ds, dx) , 
\end{equation*}
where $M$ is the martingale measure arising from \eqref{MP}. 
\end{Thm}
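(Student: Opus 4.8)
The plan is to feed $F$ into the functional It\={o}-formula (Theorem~\ref{Thm.fct.Ito}) and then exploit the classical fact that a continuous martingale of finite variation starting at $0$ is indistinguishable from $0$. Concretely, Theorem~\ref{Thm.fct.Ito} applied to $F$ and the $B(A,c)$-superprocess $X$ yields, for $t\in[0,T]$,
\begin{equation*}
F(t,X_t)-F(0,X_0)=N_t+\int_0^t\int_E \mathcal{D}_x F(s,X_s)\,M(ds,dx),
\end{equation*}
where $N_t$ collects the horizontal-derivative term $\int_0^t \mathcal{D}^*F(s,X_s)\,ds$, the term $\int_0^t\int_E A^{(x)}\mathcal{D}_xF(s,X_s)\,X(s)(dx)\,ds$ and the term $\frac{1}{2}\int_0^t\int_E c\,\mathcal{D}_{xx}F(s,X_s)\,X(s)(dx)\,ds$.

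First I would verify that the stochastic integral against $M$ is a genuine (square-integrable) martingale, not merely a local one. By Condition~\ref{cond2}(iii) the integrand $\mathcal{D}_xF(s,X_s)$ is bounded, and it is adapted and (via continuity of $F$ and of the paths of $X$) optional, hence it lies in $\mathcal{P}_M$ as soon as $\mathbb{E}\big[c\int_0^T\langle X(s),1\rangle\,ds\big]<\infty$; the latter follows from the first-moment formula $\mathbb{E}[\langle X(s),1\rangle]=\langle m,1\rangle$ for the $B(A,c)$-superprocess. Proposition~\ref{Prop.Wal.2}(ii) together with the remark following it then gives that $t\mapsto\int_0^t\int_E\mathcal{D}_xF(s,X_s)\,M(ds,dx)$ is an $L^2$-martingale.

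Since $(F(t,X_t))_{t\in[0,T]}$ is a martingale by hypothesis and the stochastic integral term is a martingale, the process
\begin{equation*}
N_t=F(t,X_t)-F(0,X_0)-\int_0^t\int_E\mathcal{D}_xF(s,X_s)\,M(ds,dx)
\end{equation*}
is a martingale with $N_0=0$. On the other hand $N$ is, for $\mathbb{P}_m$-a.e.\ $\omega$, a continuous function of $t$ of finite variation: each of its three constituents is absolutely continuous in $t$, because the integrands $\mathcal{D}^*F$, $A^{(x)}\mathcal{D}_xF$ and $c\,\mathcal{D}_{xx}F$ are bounded on the compact set $\{(s,X_s(\omega)):s\in[0,T]\}\subset\Lambda_T$ (the image of $[0,T]$ under the continuous map $s\mapsto(s,X_s)$) times the compact space $E$, and $s\mapsto X(s)(E)$ is pathwise bounded. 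A continuous martingale of finite variation starting at $0$ is indistinguishable from $0$, so $N\equiv 0$, which is exactly the asserted representation.

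The main obstacle I anticipate is the bookkeeping needed to make the ``true martingale'' and ``finite variation'' claims rigorous — in particular checking $\mathcal{D}_xF(s,X_s)\in\mathcal{P}_M$ (which relies on the first-moment bound for the total mass) and arguing that the drift integrands are genuinely locally bounded along trajectories, so that $N$ has locally finite variation. Once these routine points are settled, the vanishing of $N$ is the standard continuous-local-martingale-plus-finite-variation argument. One should also record that $(F(t,X_t))_t$ is a well-defined continuous adapted process, which follows from the continuity of $F$ on $(\Lambda_T,d_\infty)$ and of the paths of $X$, so that the martingale hypothesis is meaningful.
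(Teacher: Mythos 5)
Your proof is correct and follows essentially the same route as the paper: apply Theorem \ref{Thm.fct.Ito} and conclude that the finite-variation part must vanish because $(F(t,X_t))_t$ is a martingale. The paper simply asserts that ``the first three integrals of the sum vanish'' without spelling out the continuous-martingale-of-finite-variation argument or the integrability checks, so your version is a more detailed rendering of the same proof rather than a different one.
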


\begin{proof}
From Theorem \ref{Thm.fct.Ito} we get that
\begin{align*}
F(t, X_t) - F(0 , X(0)) \ = \quad &\hphantom{\frac{1}{2}}   \int_0^t \mathcal{D}^* F( s, X_s) ds \\
+ \ & \hphantom{\frac{1}{2}} \int_0^t \int_E A^{(x)} \mathcal{D}_x F(s, X_s) X(s)(dx) ds \\
+ \ & \frac{1}{2} \int_0^t \int_E c \mathcal{D}_{xx} F(s, X_s) X(s)(dx) ds \\
+ \ &\hphantom{\frac{1}{2}}  \int_0^t \int_E  \mathcal{D}_x F (s, X_s) M(dx, ds)
\end{align*}
However, since $F( t, X_t )$ is a martingale, the first three integrals of the sum vanish and thus
\begin{equation*}
F(t, X_t) = F(0, X_0) + \int_0^t \int_E \mathcal{D}_x F(s, X_s) M(ds, dx) ,
\end{equation*}
which is a martingale as $M$ is a martingale measure.
\end{proof}

\begin{Cor}
Let $F$ satisfy Conditions \ref{cond1}, $X$ solve \eqref{MP} and let $F(t, X(t))$ be a martingale. Then 
\begin{equation*}
F(t, X(t)) = F(0, X(0)) + \int_0^t \int_E  D_x F(s, X(s)) M(ds, dx) , 
\end{equation*}
where $M$ is the martingale measure arising from \eqref{MP}. 
\end{Cor}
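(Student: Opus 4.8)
The plan is to repeat, essentially verbatim, the argument used for Theorem \ref{ThmMR}, but starting from the non-functional It\={o}-formula of Theorem \ref{Thm.Ito} rather than from Theorem \ref{Thm.fct.Ito}. First I would apply Theorem \ref{Thm.Ito} to $F$ and $X$: since $X$ solves \eqref{MP}, $A$ is a good generator and $F$ satisfies Conditions \ref{cond1}, this yields
\begin{align*}
F(t,X(t)) - F(0,X(0)) = \ & \int_0^t D_s F(s,X(s))\,ds + \int_0^t \int_E A^{(x)} D_x F(s,X(s))\,X(s)(dx)\,ds \\
& + \tfrac12 \int_0^t \int_E c\,D_{xx}F(s,X(s))\,X(s)(dx)\,ds + \int_0^t \int_E D_x F(s,X(s))\,M(ds,dx).
\end{align*}

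Second, I would write the right-hand side as $F(0,X(0)) + V_t + N_t$, where $V$ gathers the three $ds$-integrals and $N$ is the stochastic integral against $M$. The process $V$ is continuous, adapted, of finite variation with $V_0 = 0$, and $N$ is a continuous local martingale with $N_0=0$. By hypothesis $F(\cdot,X(\cdot))$ is a martingale, hence $V = F(\cdot,X(\cdot)) - F(0,X(0)) - N$ is the difference of a martingale and a local martingale, so it is itself a continuous local martingale; being also of finite variation and null at $0$, it is indistinguishable from $0$. Consequently the three $ds$-integrals vanish almost surely and
\begin{equation*}
F(t,X(t)) = F(0,X(0)) + \int_0^t \int_E D_x F(s,X(s))\,M(ds,dx),
\end{equation*}
which is a martingale because $M$ is a martingale measure.

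I do not expect a real obstacle. The only points needing care are the admissibility of $D_x F(s,X(s))$ as an integrand for $M$ (so that $N$ is well defined, and in fact a true martingale once we know it equals the martingale $F(\cdot,X(\cdot))-F(0,X(0))$) and the legitimacy of treating $V$ as a genuine finite-variation process; both are handled exactly as in the proof of Theorem \ref{Thm.Ito}, by localizing along the stopping times $\tau_K = \inf\{t : \langle X(t),1\rangle \ge K\}$, on which $D_x F(s,X(s))$ is bounded since $D_x F$ is continuous on the compact set $[0,T]\times E\times M_K(E)$, and letting $K\to\infty$. Everything else is just the uniqueness of the canonical decomposition of a continuous semimartingale, precisely as in Theorem \ref{ThmMR}.
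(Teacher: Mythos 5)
Your proposal is correct and follows the first of the two routes the paper itself suggests (repeat the proof of Theorem \ref{ThmMR} with Theorem \ref{Thm.Ito} in place of Theorem \ref{Thm.fct.Ito}); you merely spell out the "first three integrals vanish" step via the uniqueness of the decomposition of a continuous semimartingale, together with the localization needed for integrability. No gap.
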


\begin{proof}
Either repeat the proof of Theorem \ref{ThmMR} using Theorem \ref{Thm.Ito} or apply Theorem \ref{ThmMR} assuming $F(t, X(t)) = \tilde{F}(t, X_t)$ for a suitable $\tilde{F}$.
\end{proof}

\appendix

\renewcommand{\thesection}{Appendix \Alph{section}}

\section{Applying Theorem \ref{Thm.Ito} in the Proof of Theorem \ref{Thm.fct.Ito}} \label{AppendA}

In order to apply Theorem \ref{Thm.Ito} in the proof of Theorem \ref{Thm.fct.Ito} we have to confirm that $\phi$ satisfies Conditions \ref{cond1}. \\

Consider the following:
\begin{align*}
& App^n(X_t)_{\tau_i^n -} (u) + (\mu -X(\tau_i^n)) 1_{[\tau_i^n ,T]}(u) \\
= & \begin{cases}
App^n(X_t) (u) \quad &\text{ if $u \in [0, \tau_i^n)$} \\
App^n(X_t)(\tau_i^n -) + \mu - X(\tau_i^n) \quad &\text{ if $u \in  [\tau_i^n, T]$}
\end{cases} \\
= & \begin{cases}
App^n(X_t)_{\tau_i^n -} (u) \quad &\text{ if $u \in [0, \tau_i^n)$} \\
\mu  \quad &\text{ if $u \in [\tau_i^n, T]$}
\end{cases} .
\end{align*}
Now, as 
\begin{align*}
& d_\infty((\tau_i^n , App^n(X_t)_{\tau_i^n -}  + (\mu -X(\tau_i^n)) 1_{[\tau_i^n ,T]}), \\
& \hphantom{d_\infty(} (\tau_i^n,  App^n(X_t)_{\tau_i^n -}  + (\mu_m -X(\tau_i^n)) 1_{[\tau_i^n ,T]}) )\\
 = \ & \sup_{u \in [0,T]} d_P(App^n(X_t)_{\tau_i^n -} (u) + (\mu -X(\tau_i^n)) 1_{[\tau_i^n ,T]}(u), \\
 &\hphantom{\sup_{u \in [0,T]}d_P(}App^n(X_t)_{\tau_i^n -} (u) + (\mu_m -X(\tau_i^n)) 1_{[\tau_i^n ,T]}(u) ) \\
 = \ & d_P(\mu_m , \mu), 
\end{align*}
we get the continuity of $\phi$ with respect to $\mu$ from
\begin{align*}
&\{ \mu_m \xrightarrow{ m \rightarrow \infty} \mu\} \\
 \Rightarrow \quad &\{ d_P(\mu_m, \mu) \xrightarrow{ m \rightarrow \infty} 0\} \\
 \Rightarrow \quad &\{ d_\infty((\tau_i^n , App^n(X_t)_{\tau_i^n -}  + (\mu -X(\tau_i^n)) 1_{[\tau_i^n ,T]}), \\
& \hphantom{\{d_\infty( } \ (\tau_i^n,  App^n(X_t)_{\tau_i^n -}  + (\mu_m -X(\tau_i^n)) 1_{[\tau_i^n ,T]}) ) \xrightarrow{ m \rightarrow \infty} 0 \} \\
 \Rightarrow \quad & \{ |F(\tau_i^n, App^n(X_t)_{\tau_i^n -}^{\mu - X(\tau_i^n)}) - F(\tau_i^n, App^n(X_t)_{\tau_i^n -}^{\mu_n - X(\tau_i^n)})| \xrightarrow{ m \rightarrow \infty} 0 \}
\end{align*}
where the last part follows from the continuity of $F$. \\

As $\phi$ is independent of $s$, we do not have to consider the conditions with respect to $s$. Thus, for $\phi$ to fulfill part 1 of Conditions \ref{cond1}, we only have to consider the vertical derivatives of $\phi$. However, as 
\begin{align*}
D_x \phi(\mu) &= \mathcal{D}_x F (\tau_i^n, App^n(X_t)_{\tau_i^n - } + (\mu - X(\tau_i^n)) 1_{[\tau_i^n , T]} ) , \\
D_{x_1 x_2} \phi(\mu) &= \mathcal{D}_{x_1 x_2} F (\tau_i^n, App^n(X_t)_{\tau_i^n - } + (\mu - X(\tau_i^n)) 1_{[\tau_i^n , T]} ), \\
\end{align*}
and 
\begin{equation*}
D_{x_1 x_2 x_3} \phi(\mu) = \mathcal{D}_{x_1 x_2 x_3} F (\tau_i^n, App^n(X_t)_{\tau_i^n - } + (\mu - X(\tau_i^n)) 1_{[\tau_i^n , T]} ) ,
\end{equation*}
we get the continuity with respect to $x_1$, $x_2$, $x_3$ and $\mu$ from the conditions on $F$.  These requirements on $F$ and the same arguments as above also guarantee that the second and third set of conditions in Conditions \ref{cond1} are fulfilled by $\phi$, which allows us to apply Theorem \ref{Thm.Ito} to $\phi$.

\section{Applying the Dominated Convergence Theorem in the Proof of Theorem \ref{Thm.fct.Ito}.} \label{AppendB}

To simplify notations, set $\alpha^n(s) = \tau_{i_n(s)}^n$ and 
\begin{equation*}
\beta^n(s) = App^n(X_t)_{\tau_{i_n(s)}^n-} + (X(s) - X(\tau_{i_n(s)}^n )) 1_{[\tau_{i_n(s)}^n,T]} ).
\end{equation*}
Further, as $X$ is localized by assumption, $X$ is a bounded measure, i.e. there exists a constant $C > 0$ such that $X(s)(E) \leq C < \infty$ for all $s \in [0,T]$. \\

As we know that $d_\infty( (s, X_s), (\alpha^n(s), \beta^n(s))) \rightarrow 0$ as $n \rightarrow \infty$, the continuity of $A^{(x)}\mathcal{D}_x F(s, \omega)$ yields
\begin{equation*}
\lim_{n \rightarrow \infty} A^{(x)} \mathcal{D}_x F(\alpha^n(s), \beta^n(s)) = A^{(x)} \mathcal{D}_x F(s, X_s) .
\end{equation*}
We also have that $A^{(x)}\mathcal{D}_x F(s, \omega)$ is bounded and thus there exists an upper bound $B \in \mathbb{R}$ such that $\sup_{s \in [0,T], \, x \in E} | A^{(x)}\mathcal{D}_x F(\alpha^n(s), \beta^n(s)) | \leq B$ for all $n$ and
\begin{equation*}
\int_E | B | X(s)(dx)  = B X(s)(E) \leq B C  < \infty
\end{equation*}
for all $s \in [0,T]$. Thus we can apply the dominated convergence theorem to get
\begin{equation*}
\lim_{n \rightarrow \infty} \int_E A^{(x)} \mathcal{D}_x F(\alpha^n(s), \beta^n(s)) X(s)(dx) = \int_E A^{(x)} \mathcal{D}_x F(s, X_s) X(s)(dx)
\end{equation*}
for all $s \in [0,T]$. \\

Combining this with the fact that
\begin{align*}
| \int_E A^{(x)} \mathcal{D}_x F(\alpha^n(s), \beta^n(s)) X(s)(dx) | &\leq \int_E | A^{(x)} \mathcal{D}_x F(\alpha^n(s), \beta^n(s)) | X(s)(dx) \\
& \leq B C
\end{align*}
for all $n$ and
\begin{equation*}
\int_0^t | BC | ds = BC t < \infty,
\end{equation*}
we can once again apply the dominated convergence theorem to finally end up with
\begin{align*}
&\lim_{n \rightarrow \infty} \int_0^t \int_E A^{(x)} \mathcal{D}_x F(\alpha^n(s), \beta^n(s)) X(s)(dx)ds \\
= &\int_0^t \int_E A^{(x)} \mathcal{D}_x F(s, X_s) X(s)(dx) ds . 
\end{align*}

\section*{Acknowledgements}

The first author was supported by the Deutsche Forschungsgemeinschaft.

\bibliography{Preprint_SP}
\bibliographystyle{abbrv}

\end{document}